\tikzset{join/.code=\tikzset{after node path={%
\ifx\tikzchainprevious\pgfutil@empty\else(\tikzchainprevious)%
edge[every join]#1(\tikzchaincurrent)\fi}}}
\tikzset{>=stealth',every on chain/.append style={join},
         every join/.style={->}}
\tikzset{
    >=stealth',
    punkt/.style={
           rectangle,
           rounded corners,
           draw=black, very thick,
           text width=6.5em,
           minimum height=2em,
           text centered},
    pil/.style={
           ->,
           thick,
           shorten <=2pt,
           shorten >=2pt,}
}
\newtheorem{thm}{Theorem}[section]
\newtheorem{theorem}{Theorem}[section]
\newtheorem{proposition}[thm]{Proposition}
\newtheorem{lemma}[thm]{Lemma}
\theoremstyle{definition}
\newtheorem{example}[thm]{Example}
\newtheorem{remark}[thm]{Remark}
\newtheorem{definition}[thm]{Definition}
\font\black=cmbx10 \font\sblack=cmbx7 \font\ssblack=cmbx5 \font\blackital=cmmib10  \skewchar\blackital='177
\font\sblackital=cmmib7 \skewchar\sblackital='177 \font\ssblackital=cmmib5 \skewchar\ssblackital='177
\font\sanss=cmss12 \font\ssanss=cmss8 scaled 900 \font\sssanss=cmss8 scaled 600 \font\blackboard=msbm10
\font\sblackboard=msbm7 \font\ssblackboard=msbm5 \font\caligr=eusm10 \font\scaligr=eusm7 \font\sscaligr=eusm5
\font\bsymb=cmsy10 scaled\magstep2
\def\all#1{\setbox0=\hbox{\lower1.5pt\hbox{\bsymb
       \char"38}}\setbox1=\hbox{$_{#1}$} \box0\lower2pt\box1\;}
\def\exi#1{\setbox0=\hbox{\lower1.5pt\hbox{\bsymb \char"39}}
       \setbox1=\hbox{$_{#1}$} \box0\lower2pt\box1\;}
\def\tx#1{{\fam0\relax#1}}
\def\sss#1{{\fam\ssfam\relax#1}}
\def\pmb#1{\setbox0\hbox{${#1}$} \copy0 \kern-\wd0 \kern.2pt \box0}
\def\pmbb#1{\setbox0\hbox{${#1}$} \copy0 \kern-\wd0
      \kern.2pt \copy0 \kern-\wd0 \kern.2pt \box0}
\def\pmbbb#1{\setbox0\hbox{${#1}$} \copy0 \kern-\wd0
      \kern.2pt \copy0 \kern-\wd0 \kern.2pt
    \copy0 \kern-\wd0 \kern.2pt \box0}
\def\pmxb#1{\setbox0\hbox{${#1}$} \copy0 \kern-\wd0
      \kern.2pt \copy0 \kern-\wd0 \kern.2pt
      \copy0 \kern-\wd0 \kern.2pt \copy0 \kern-\wd0 \kern.2pt \box0}
\def\pmxbb#1{\setbox0\hbox{${#1}$} \copy0 \kern-\wd0 \kern.2pt
      \copy0 \kern-\wd0 \kern.2pt
      \copy0 \kern-\wd0 \kern.2pt \copy0 \kern-\wd0 \kern.2pt
      \copy0 \kern-\wd0 \kern.2pt \box0}
\mathchardef\za="710B  
\mathchardef\zb="710C  
\mathchardef\zg="710D  
\mathchardef\zd="710E  
\mathchardef\zve="710F 
\mathchardef\zz="7110  
\mathchardef\zh="7111  
\mathchardef\zvy="7112 
\mathchardef\zi="7113  
\mathchardef\zk="7114  
\mathchardef\zl="7115  
\mathchardef\zm="7116  
\mathchardef\zn="7117  
\mathchardef\zx="7118  
\mathchardef\zp="7119  
\mathchardef\zr="711A  
\mathchardef\zs="711B  
\mathchardef\zt="711C  
\mathchardef\zu="711D  
\mathchardef\zvf="711E 
\mathchardef\zq="711F  
\mathchardef\zc="7120  
\mathchardef\zw="7121  
\mathchardef\ze="7122  
\mathchardef\zy="7123  
\mathchardef\zf="7124  
\mathchardef\zvr="7125 
\mathchardef\zvs="7126 
\mathchardef\zf="7127  
\mathchardef\zG="7000  
\mathchardef\zD="7001  
\mathchardef\zY="7002  
\mathchardef\zL="7003  
\mathchardef\zX="7004  
\mathchardef\zP="7005  
\mathchardef\zS="7006  
\mathchardef\zU="7007  
\mathchardef\zF="7008  
\mathchardef\zW="700A  
\newcommand{\be}{\begin{equation}}
\newcommand{\ee}{\end{equation}}
\newcommand{\raa}{\rightarrow}
\newcommand{\bea}{\begin{eqnarray}}
\newcommand{\eea}{\end{eqnarray}}
\newcommand{\beas}{\begin{eqnarray*}}
\newcommand{\eeas}{\end{eqnarray*}}
\def\*{{\textstyle *}}
\newcommand{\nn}{\nonumber}
\newcommand{\ti}{\times}
\def\lan{\langle}
\def\ran{\rangle}
\def\Aut{\sss{Aut}}
\def\Iso{\sss{Iso}}
\def\ol{\overline}
\def\sT{{\sss T}}
\def\xd{\tx{d}}
\newcommand{\dd}{\mathbf{d}}
\newcommand{\N}{\mathbb{N}}
\newcommand{\R}{\mathbb{R}}
\newcommand{\G}{\mathbb{G}}
\newcommand{\V}{\mathbb{V}}
\newcommand{\M}{\mathbb{M}}
\newcommand{\A}{\mathbb{A}}
\newcommand{\Pe}{\mathbb{P}}
\newcommand{\wG}{\widehat{\G}}
\newcommand{\op}[1]{\!\!\mathop{\rm ~#1}\nolimits}
\newcommand{\id}{\op{id}}
\newcommand{\Id}{\mathbbmss{1}}
\newcommand{\Lie}{\textnormal{Lie}}
\newcommand{\catname}[1]{\textnormal{\texttt{#1}}}
\newcommand{\rmA}{\textnormal{A}}
\newcommand{\rmp}{\textnormal{p}}
\newcommand{\rmb}{\textnormal{b}}
\newcommand{\Ad}{\textnormal{Ad}}
\newcommand{\pr}{\textnormal{pr}}
\newcommand{\mg}{\mathfrak{g}}
\newcommand{\bg}{\bar{\mathfrak{g}}}
\def\cG{\mathcal{G}}
\def\cS{\mathcal{S}}
\newcommand{\wu}{\operatorname{deg}}
\begin{document}
\title{\bf On $n$-tuple principal bundles\thanks{Research funded by the  Polish National Science Centre grant under the contract number DEC-2012/06/A/ST1/00256.  }}
\date{}
\author{\\ KATARZYNA  GRABOWSKA$^1$\\ JANUSZ GRABOWSKI$^2$\\
        \\
         $^1$ {\it Faculty of Physics}\\
                {\it University of Warsaw}\\
                \\$^2$ {\it Institute of Mathematics}\\
                {\it Polish Academy of Sciences}
                }

\maketitle

\begin{abstract}{We develop the concept of a double (more generally $n$-tuple) principal bundle departing from
a compatibility condition for a principal action of a Lie group on a groupoid.}
\end{abstract}

\vspace{2mm} \noindent {\bf MSC 2010}: (primary) 22A22, 58H05, 57R22;
 (secondary) 18D05, 18B40.

\noindent{\bf Keywords}: fiber bundles, principal bundles, Lie groupoids, double vector bundles, group actions.

\section{Introduction}
The concept of double structures, i.e. two structures of the same type satisfying some compatibility condition, became recently very popular. Let us mention \emph{double vector bundles} \cite{Chen:2014, Konieczna1999,Pr1,Pr2,Pr3}, \emph{double affine bundles} \cite{Grabowski:2010}, \emph{double Lie algebroids and groupoids} \cite{Brown1992, Mackenzie:1992, Mackenzie:2000a}, \emph{double graded bundles} \cite{Grabowski:2012}, etc. Of course, one can consider also $n$-tuple instead of double structures. It is a particular case of mathematical objects consisting with a pair (or $n$ pieces) of compatible (not necessary of the same type) structures, which is crucial for the whole mathematics.

In \cite{Lang:2016} the authors introduced \emph{double principal bundles} (DPBs) as  a diagram of four principal bundles and two exact sequences of their structure groups. The definition is \emph{ad hoc} with no real motivation and explanation. In this paper we introduce DPBs as two principal bundle structures which are naturally compatible in the sense that one of the structures, with the structure group $G_1$, is compatible in a natural sense with the groupoid determined by the second structure, so that we have a $G_1$-groupoid (see \cite{Bruce:2015}). It turns out that the concept of a DPB in this sense is equivalent to the concept of a $\mathbb{G}$-DPB in \cite{Lang:2016}, i.e. the principal bundle structure of a \emph{double principal group} $\G$  -- a Lie group generated by two its normal subgroups $G_1,G_2$.

The advantage of this approach is that it can be easily generalized to the $n$-tuple case with natural examples of $n$-tuple principal groups associated with $n$-tuple vector (or graded) bundles.

\section{$G$-groupoids}
\subsection{The compatibility condition}
Our general reference to the theory of Lie groupoids and Lie algebroids will be Mackenzie's book \cite{Mackenzie2005}.

Let $\mathcal{G} \rightrightarrows M$ be an arbitrary Lie groupoid with \emph{source map} $s: \mathcal{G} \rightarrow M$ and \emph{target map} $t: \mathcal{G} \rightarrow M$. There is also the inclusion map $\iota_M : M \rightarrow \mathcal{G}$, $\zi_M(x)=\Id_x$,   and a \emph{partial multiplication}  $(g,h) \mapsto gh$ which is defined on $\mathcal{G}^{(2)}=\{(g,h)\in\mathcal{G}\ti\mathcal{G}: s(g) = t(h)\}$. Moreover, the manifold $\mathcal{G}$ is foliated by $s$-fibres $\mathcal{G}_{x}= \{ \left.g \in \mathcal{G}\right| s(g) =x\}$, where $x \in M$. As by definition the source and target maps are submersions, the $s$-fibres are themselves smooth manifolds. Geometric objects associated with this foliation will be given the superscript $s$. In particular, the distribution tangent to the leaves of the foliation will be denoted by $\sT^{s} \mathcal{G}$.

Let $\mathcal{G}_{i} \rightrightarrows  M_{i}$ $(i=1,2)$ be a pair of Lie groupoids. Then a \emph{Lie groupoid morphisms} is a pair of maps $(\Phi, \phi)$ such that the following diagram is commutative\smallskip

\begin{tabular}{p{5cm} p{10cm}}
\begin{xy}
(0,20)*+{\mathcal{G}_{1}}="a"; (20,20)*+{\mathcal{G}_{2}}="b";%
(0,0)*+{M_{1}}="c"; (20,0)*+{M_{2}}="d";%
{\ar "a";"b"}?*!/_2mm/{\Phi};
{\ar@<1.ex>"a";"c"} ;
{\ar@<-1.ex> "a";"c"} ?*!/^3mm/{s_{1}} ?*!/_6mm/{t_{1}};
{\ar@<1.ex>"b";"d"};%
{\ar@<-1.ex> "b";"d"}?*!/^3mm/{s_{2}} ?*!/_6mm/{t_{2}};  %
{\ar "c";"d"}?*!/^3mm/{\phi};
\end{xy}
&
\vspace{-60pt}
\noindent in the sense that\smallskip

\noindent $s_{2}\circ \Phi = \phi \circ s_{1},  \hspace{20pt}\textnormal{and} \hspace{20pt} t_{2}\circ \Phi = \phi \circ t_{1} $
\end{tabular}
\smallskip

\noindent subject to the further condition that $\Phi$ respects the (partial) multiplication; if $g,h \in \mathcal{G}_{1}$ are composable, then  $ \Phi(gh) = \Phi(g)\Phi(h)$. It then follows that for $x \in M_{1}$  we have $\Phi(\Id_{x}) = \Id_{\phi(x)}$ and  $\Phi(g^{-1}) = \Phi(g)^{-1}$.

\medskip
In our study of Jacobi and contact groupoids \cite{Bruce:2015} we deal with Lie groupoids that have a compatible action of $\mathbb{R}^{\times}$ upon them; compatibility to be defined shortly. However, as the basic theory of compatible group actions on Lie groupoids is independent of the actual Lie  group, we recall the general setting from \cite{Bruce:2015}. We will use the characterization of a proper action $P\ti G\ni(p,g)\mapsto pg\in P$ of a Lie group $G$ on a manifold $P$ by the condition that for each compact $K\subset P$ the set $K(G)=\{ g\in G\, |\, \ Kg\cap K\ne\emptyset\}$ is relatively compact in $G$ (cf. \cite{Palais:1961}).

\begin{definition} An action $\zr:G\ti\cG\to\cG$ of a Lie group $G$ on a Lie groupoid $\cG\rightrightarrows M$ is called \emph{compatible with the groupoid structure} if $h_g:\cG\to\cG$ are groupoid isomorphisms for all $g\in G$, and \emph{principal} if the action is free and proper. A groupoid equipped with a compatible principal action we call  a \emph{principal bundle $G$-groupoid}, (\emph{$G$-groupoid in short}).
\end{definition}

\begin{remark}
The reader should immediately be reminded of Mackenzie's notion of a \emph{PBG-groupoid} \cite{Mackenzie:1987,Mackenzie:1988} which is close to ours but not exactly the same.
\end{remark}

\begin{proposition} (cf. \cite{Bruce:2015})
A compatible action $\zr:G\ti\cG\to\cG$ of a Lie group $G$ on a Lie groupoid $\cG\rightrightarrows M$ induces a canonical action $\zr_M:G\ti M\to M$ on the manifold $M$ of units. If $\zr$ is principal, then $\zr_M$ is also principal. In particular, in this case the set of orbits $M_0=M/G$ has a canonical manifold structure.
\end{proposition}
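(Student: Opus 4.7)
The plan is to observe that a groupoid isomorphism automatically sends units to units, so each $h_g$ restricts to a diffeomorphism of the unit manifold $M \simeq \iota_M(M) \subset \cG$; this gives the induced action. Concretely, I would define
\[
\zr_M(g,x) \;=\; s\bigl(h_g(\iota_M(x))\bigr) \;=\; t\bigl(h_g(\iota_M(x))\bigr),
\]
the two expressions agreeing because $h_g$ is a groupoid morphism and $\iota_M(x)$ is a unit, so $h_g(\iota_M(x))$ is a unit as well, and units are exactly the fixed points of $s$ and $t$. Smoothness of $\zr_M$ then follows from smoothness of $\zr$, $\iota_M$ and $s$; the action axioms $\zr_M(e,x)=x$ and $\zr_M(gh,x)=\zr_M(g,\zr_M(h,x))$ follow directly from $h_e=\id_\cG$ and $h_{gh}=h_g\circ h_h$, since $\iota_M$ intertwines $\zr_M$ and $\zr$ by construction.

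Next I would verify freeness. Suppose $\zr_M(g,x)=x$. Writing $h_g(\iota_M(x)) = \iota_M(\zr_M(g,x)) = \iota_M(x)$ (using that a groupoid morphism takes the unit at $y$ to the unit at the image of $y$), freeness of $\zr$ forces $g=e$. Hence $\zr_M$ is free whenever $\zr$ is.

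For properness I would use the compactness criterion recalled just before the definition. Given a compact $K\subset M$, the image $\wt{K}:=\iota_M(K)\subset\cG$ is compact because $\iota_M$ is a smooth embedding. Now
\[
K(G)\;=\;\{\,g\in G:\zr_M(g,\cdot)(K)\cap K\ne\emptyset\,\}
\;=\;\{\,g\in G: h_g(\wt K)\cap \wt K\ne\emptyset\,\}\;=\;\wt K(G),
\]
where the middle equality again uses that $h_g$ preserves units and that $\iota_M$ is injective. Since $\zr$ is proper, $\wt K(G)$ is relatively compact in $G$; hence so is $K(G)$, and $\zr_M$ is proper.

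Finally, with $\zr_M$ a free and proper smooth action of a Lie group on a manifold, the classical quotient manifold theorem (Godement's criterion) endows $M_0=M/G$ with a unique smooth structure for which the projection $M\to M_0$ is a surjective submersion. I don't expect any genuine obstacle here — the content is packaged in the observation that groupoid isomorphisms preserve units, together with the properness criterion the authors have already invoked; the mildly delicate point is only to be explicit that $\iota_M$ is a closed embedding so that compactness and the $K(G)$-relation transfer cleanly from $M$ to $\cG$.
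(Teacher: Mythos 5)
Your proof is correct and follows essentially the same route as the paper's: the authors likewise observe that the action commutes with $s$ and $t$, hence restricts to the (invariant, immersed) unit submanifold $M\subset\cG$, and that freeness and properness are inherited by restriction to an invariant submanifold. Your write-up merely makes explicit the $K(G)$-compactness verification that the paper leaves implicit; note only that continuity and injectivity of $\iota_M$ already suffice for that transfer, so the closed-embedding caveat at the end is not actually needed.
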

\begin{proof}
The action of $G$ on $\mathcal{G}$ commutes with the source and target maps, thus projects onto a $G$-action on the manifold $M$. Moreover, $M$ as an immersed submanifold of $\cG$ is invariant with respect to the $G$-action, and the projected and restricted actions coincide.
As the action of $G$ on $\mathcal{G}$ is principal, it is also principal on the immersed submanifold $M$, so $M$ inherits a structure of a principal $G$-bundle.
\end{proof}

In some cases we have to deal with actions which are originally not free but induce a principal action $[\zr]$ of $G/\ker{\zr}$, where $\ker(\zr)=\{ g\in G\,|\, \ \zr_g=\id\}$ is the kernel of the action. In this case we will speak about a \emph{pre-principal action}.

\begin{proposition}\label{pro1}
A compatible action $\zr:G\ti\cG\to\cG$ of a Lie group $G$ on a Lie groupoid $\cG\rightrightarrows M$ is pre-principal if and only if the  induced  action $[\zr]_M$ of $G/\ker(\zr)$ on the manifold $M$ of units is principal.
\end{proposition}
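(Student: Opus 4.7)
The plan is to reformulate the statement in terms of the genuine Lie group $\overline{G} := G/\ker(\rho)$ and then reduce to the preceding proposition together with the Palais characterization of properness recalled before Definition~1. First I observe that $K := \ker(\rho)$ is the kernel of the group homomorphism $G \to \Aut(\mathcal{G})$ associated to $\rho$; it is closed in $G$ since fixing each point $\gamma \in \mathcal{G}$ is a closed condition, hence closed and normal. Therefore $\overline{G}$ carries a Lie group structure, $\rho$ descends to an action $[\rho]$ of $\overline{G}$ on $\mathcal{G}$, and, since $K \subset \ker(\rho_M)$, it also descends to an action $[\rho]_M$ of $\overline{G}$ on $M$. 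By definition, pre-principality of $\rho$ is just principality of $[\rho]$.

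The forward direction is immediate: applying the preceding proposition to the $\overline{G}$-action $[\rho]$ on $\mathcal{G}$ yields that the induced action on $M$ is principal, and this induced action is exactly $[\rho]_M$.

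For the reverse direction I would split into freeness and properness. Assume $[\rho]_M$ is principal. To see $[\rho]$ is free, suppose $\rho_g(\gamma) = \gamma$ for some $\gamma \in \mathcal{G}$; applying the (equivariant) source map gives $\rho_{g,M}(s(\gamma)) = s(\gamma)$, and freeness of $[\rho]_M$ forces $[g] = [e]$ in $\overline{G}$. To see $[\rho]$ is proper, invoke Palais's criterion: given a compact $C \subset \mathcal{G}$, put $D := s(C) \subset M$, which is compact. If $\gamma \in C \cap \rho_g(C)$ with $\gamma = \rho_g(\gamma')$ and $\gamma' \in C$, then $s(\gamma) = \rho_{g,M}(s(\gamma')) \in D \cap \rho_{g,M}(D)$, so the ``collision set'' $C(\overline{G}) = \{[g] \in \overline{G} : C \cap \rho_g(C) \neq \emptyset\}$ is contained in $D(\overline{G})$. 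The latter is relatively compact by properness of $[\rho]_M$, hence so is the former, and $[\rho]$ is proper.

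There is no real conceptual obstacle; the whole argument is driven by the fact that the source map transports both stabilizer data and Palais collision data from $\mathcal{G}$ down to $M$. The only mild piece of care is the initial verification that $K$ is a closed normal subgroup of $G$, needed to make $\overline{G}$ a Lie group and $[\rho]$, $[\rho]_M$ smooth actions.
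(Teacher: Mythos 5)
Your proof is correct and follows essentially the same route as the paper's: the forward direction is quoted from the preceding proposition, freeness is transported down via the projection to units, and properness is obtained from the Palais collision-set criterion by noting that $K[g]\cap K\ne\emptyset$ forces $K_M[g]\cap K_M\ne\emptyset$ for $K_M$ the (compact) image of $K$ in $M$. The only difference is your explicit preliminary check that $\ker(\zr)$ is closed and normal, which the paper leaves tacit.
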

\begin{proof}
We already know that if $[\zr]$ is principal, then $[\zr]_M$ is principal, so assume $[\zr]_M$ is principal.
Since $[\zr]_M$ is free, $\zr$ is clearly free, so it remains to show that $\zr$ is proper.
Let $K$ be a compact subset of $P$ and $K_M=\pi(K)$. As $K_M$ is clearly compact, the set
$$(G/\ker(\zr),K_M)=\{ [g]\in G/\ker(\zr)\,|\, \ (K_M[g])\cap K_M\ne\emptyset\}
$$
is compact.
Since $(K[g])\cap K\ne\emptyset$ implies $(K_M[g])\cap K_M\ne\emptyset$,
$$(G/\ker(\zr),K)=\{ [g]\in G/\ker(\zr)\,|\, \ (K[g])\cap K\ne\emptyset\}
$$
is compact.

\end{proof}

\subsection{Structure of $G$-groupoids}
For any principal $G$ groupoid the reduced manifold $\cG/G=\mathcal{G}_{0}$ is canonically a Lie groupoid $\cG/G=\mathcal{G}_{0} \rightrightarrows M/G=M_{0}$, with the set of units $M_0$, defined by the following structure:

\begin{tabular}{p{5cm} p{5cm}}
\begin{xy}
(0,20)*+{\mathcal{G}}="a"; (20,20)*+{\mathcal{G}_{0}}="b";%
(0,0)*+{M}="c"; (20,0)*+{M_{0}}="d";%
{\ar "a";"b"}?*!/_2mm/{\pi};
{\ar@<1.ex>"a";"c"} ;
{\ar@<-1.ex> "a";"c"} ?*!/^3mm/{s} ?*!/_6mm/{t};
{\ar@<1.ex>"b";"d"};%
{\ar@<-1.ex> "b";"d"}?*!/^3mm/{{\zs}} ?*!/_6mm/{{\zt}};  %
{\ar "c";"d"}?*!/^3mm/{\rmp};
\end{xy}
&\vspace{-80pt}
{\begin{align*}
 &{\zs} \circ \pi = \rmp \circ s\,, \\
 &{\zt} \circ \pi = \rmp \circ t\,,\\
 & \Id_{\rmp(x)}=\zp(\Id_x)\quad\text{for all}\quad x\in M\,,\\
 & \pi(y)^{-1}=\pi(y^{-1})\quad\text{for all}\quad y\in\cG\,,\\
 &\pi(y y') = \pi(y) \pi(y')\quad\text{for all}\quad(y,y') \in \mathcal{G}^{(2)}\,,
\end{align*}}
\end{tabular}

\noindent where $\pi:\cG\to\cG_0$ is the canonical projection.
In fact, the above construction implies, tautologically, that $(\pi, \rmp) : \mathcal{G} \rightrightarrows M \rightarrow \mathcal{G}_{0}\rightrightarrows M_{0}$ is a morphism of Lie groupoids with the above structures.

\begin{theorem}\cite{Bruce:2015} The map
$$
\cS:\cG\to \cG_0\times_{M_0}M:= \left\{(y_0,x) \in \mathcal{G}_{0}\ti M ~ | ~ \rmp(x)= {\zs}(y_0) \right\}\,,\quad
\cS(y)=\left(\zp(y),s(y)\right)\,,
$$
is a diffeomorphism.
With respect to the above identification, the $G$-action is  $(y_0,x)g=(y_0,xg)$, the embedding of units is $\zi_M(x)=(\Id_x,x)$, and the source map reads $s(y_0,x)=x$. As the projection $(y_0,x)\mapsto y_0$ is a groupoid morphism, the groupoid structure is uniquely determined by its target map $t:\cG_0\times_{M_0}M\to M$, $t(y_0,x)=:y_0. x$. On the other hand, such a map $t$ is a target map if and only if it has the following properties (holding for all $x\in M$):
\begin{itemize}
\item[(i)] $\rmp(y_0.x)=\zt(y_0)$ for all $y_0\in\cG_0$\,,
\item[(ii)] $y_0.(y'_0.x)=(y_0y'_0).x$ for all $(y_0,y'_0)\in\cG_0^2$\,,
\item[(iii)] $\Id_{\rmp(x)}.x=x$\,,
\item[(iv)] $y_0.(xg)=(y_0.x)g$ for all $y_0\in\cG_0$ and all $g\in G$\,.
\end{itemize}
\end{theorem}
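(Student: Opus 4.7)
The plan is to prove the diffeomorphism first, read off the structural formulas as an immediate consequence, and finally handle the two directions of the characterization of $t$.

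First I would verify that $\cS(y)=(\pi(y),s(y))$ lands in the fiber product (this follows from $\zs\circ\pi=\rmp\circ s$) and is smooth. For injectivity of $\cS$: if $\cS(y)=\cS(y')$, then $\pi(y)=\pi(y')$ gives $y'=yg$ for a unique $g\in G$, and $s(y')=s(y)g=s(y)$ combined with freeness of the induced $G$-action on $M$ forces $g=e$. For surjectivity: given $(y_0,x)\in\cG_0\times_{M_0}M$, any lift $y'\in\pi^{-1}(y_0)$ satisfies $\rmp(s(y'))=\zs(y_0)=\rmp(x)$, so principality of the $G$-action on $M$ yields a unique $g\in G$ with $s(y')g=x$, and $\cS(y'g)=(y_0,x)$. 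For smoothness of $\cS^{-1}$, I would combine a local section of the principal bundle $\pi:\cG\to\cG_0$ with smoothness of the division map of the principal $G$-bundle $\rmp:M\to M_0$ to express the corrective $g$ as a smooth function of $(y_0,x)$.

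Once $\cS$ is a diffeomorphism the structural identifications are direct: $G$-invariance of $\pi$ and $G$-equivariance of $s$ give $\cS(yg)=(\pi(y),s(y)g)$; the relations $s(\Id_x)=x$ and $\pi(\Id_x)=\Id_{\rmp(x)}$ give the unit embedding; and $s(y_0,x)=x$ holds by construction. Since $\pi$ is a groupoid morphism, so is projection to the first factor, and the entire groupoid law on $\cG_0\times_{M_0}M$ is pinned down once the target $t(y_0,x)=:y_0.x$ is specified.

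For the forward direction of the characterization, set $y_0.x:=t(\cS^{-1}(y_0,x))$. Property (i) is then $\rmp\circ t=\zt\circ\pi$, (iii) reduces to $t(\Id_x)=x$, and (iv) is $G$-equivariance of $t$. For (ii), given composable $(y_0,y_0')\in\cG_0^{(2)}$ and $x$ with $\rmp(x)=\zs(y_0')$, lift to composable $(y,y')\in\cG^{(2)}$ with $\cS(y')=(y_0',x)$ and $\cS(y)=(y_0,y_0'.x)$; then $\cS(yy')=(y_0y_0',x)$, and applying $t$ gives $(y_0y_0').x=t(yy')=t(y)=y_0.(y_0'.x)$. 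Conversely, assuming only (i)--(iv), I would equip $\cG_0\times_{M_0}M$ with source $(y_0,x)\mapsto x$, target $(y_0,x)\mapsto y_0.x$, composition defined iff $x=y_0'.x'$ and given by $(y_0,x)(y_0',x'):=(y_0y_0',x')$, units $\zi(x):=(\Id_{\rmp(x)},x)$ and inverses $(y_0,x)^{-1}:=(y_0^{-1},y_0.x)$. Property (i) ensures consistency with the fiber product, (ii) gives associativity (using associativity in $\cG_0$), (iii) gives the unit axioms, and the inverse axioms follow from (i)--(iii); finally (iv) ensures the $G$-action $(y_0,x)g=(y_0,xg)$ is by groupoid automorphisms, delivering a $G$-groupoid with target $t$.

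The main obstacle I anticipate is smoothness of $\cS^{-1}$, which genuinely uses principality of both bundles $\pi:\cG\to\cG_0$ and $\rmp:M\to M_0$ together with smoothness of their division maps; beyond that, both directions of the characterization amount to routine bookkeeping through the identification $\cS$.
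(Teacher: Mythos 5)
Your proposal is correct. Note that the paper itself gives no proof of this theorem --- it is quoted from the reference \cite{Bruce:2015} --- so there is nothing to compare against line by line; judged on its own, your argument is sound and follows exactly the route one would expect: principality of $\pi:\cG\to\cG_0$ and of $\rmp:M\to M_0$ (the latter supplied by the earlier Proposition) give bijectivity of $\cS$ and smoothness of its inverse via local sections and the division map; the structural formulas for the action, the units and the source are immediate from equivariance of $s$ and invariance of $\pi$; and the two directions of the characterization of $t$ are correctly reduced to bookkeeping through $\cS$, with the key step (ii) obtained by lifting a composable pair so that $\cS(yy')=(y_0y_0',x)$.

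One small point worth making explicit in the converse direction: for $\cG_0\times_{M_0}M$ to be a \emph{Lie} groupoid you must also check that your target map $(y_0,x)\mapsto y_0.x$ is a surjective submersion, not just that the algebraic axioms hold. This is not a gap so much as an omitted routine verification: your inverse formula $(y_0,x)^{-1}=(y_0^{-1},y_0.x)$ defines a smooth involution of $\cG_0\times_{M_0}M$ (smoothness of $(y_0,x)\mapsto(y_0^{-1},y_0.x)$ follows from smoothness of $t$ and inversion in $\cG_0$, and property (i) guarantees it lands in the fiber product), and the target is the composition of this involution with the source projection, which is a submersion; surjectivity follows from (iii). You should also note that composability $x=y_0'.x'$ automatically forces $\zs(y_0)=\zt(y_0')$ by (i), so the product $(y_0y_0',x')$ is well defined --- you use this implicitly when writing $(y_0,x)(y_0',x'):=(y_0y_0',x')$.
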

Note that $(i)-(iii)$ mean that $t$ is an action of $\cG_0$ on $\rmp:M\to M_0$ (c.f. \cite[Definition 1.6.1]{Mackenzie2005}), and $(iv)$ means that the action is $G$-equivariant.
The $G$-groupoid determined by $t$ as above we will denote $\cG_0\times_{M_0}^t M$ and called \emph{$t$-split $G$-groupoid}. Thus, any $G$-groupoid (\ref{pGgr}) is $t$-split for some $t(y_0,x)=y_0.x$ satisfying $(i)-(iv)$.

An important particular case of the above theorem is the case of a trivial principal bundle, $M=M_0\ti G$ which is always a local form of any $G$-groupoid. In this case we can use the identification $\cG_0\times_{M_0}M\simeq\cG_0\times G$ and replace the map $t$ satisfying $(i)$ with a map $\rmb:\cG_0\to G$. Indeed, any map on a Lie group commuting with all the right-translations is a left-translation, so can we write $t(y_0,\zs(y_0),g)=(\zt(y_0),\rmb(y_0)g)$. Now, the properties $(i)-(iv)$ can be reduced to
\be\label{zb}
\rmb(y_0)\rmb(y'_0)=\rmb(y_0y'_0)
\ee
for all $(y_0,y'_0)\in\cG_0^2$, i.e. to
the fact that $\rmb:\cG_0\to G$ is a groupoid morphism.  This is of course always the local form of any $G$-groupoid. The corresponding $G$-groupoid structure, denoted with $\cG_0\ti^\rmb G$, is an obvious generalisation of the groupoid extension by the additive $\R$ with a help of a multiplicative function considered in the literature (cf. \cite[Definition 2.3]{Crainic:2007}), and we have shown that this construction is in a sense universal.
Thus we get the following.
\begin{theorem}\cite{Bruce:2015}\label{trivialsplit} For any $G$-groupoid structure on the trivial $G$-bundle $\cG=\cG_0\ti G$ there is a Lie groupoid structure on $\cG_0$
with the source and target maps $\zs,\zt:\cG_0\to M_0$ and a groupoid morphism $\rmb:\cG_0\to G$ such that the source map $s$, the target map $t$ and the partial multiplication in $\cG$ read
$$s(y_0,g)=(\zs(y_0), g)\,,\quad t(y_0,g)=(\zt(y_0),\rmb(y_0)g)\,,\quad
(y_0,g_1)(y'_0,g_2)=(y_0y'_0,g_2)\,.
$$
\end{theorem}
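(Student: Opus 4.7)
The plan is to specialise the previous structure theorem to the case of the trivial principal bundle $M=M_0\ti G$. First, I would identify the fibered product $\cG_0\times_{M_0}M$ with $\cG_0\ti G$ via
$$(y_0,(\zs(y_0),g))\longmapsto(y_0,g),$$
which intertwines the $G$-action on the former with right multiplication on the second factor of the latter. Under this identification, the source map $s(y_0,x)=x$ of the previous theorem becomes $s(y_0,g)=(\zs(y_0),g)$, giving the first claim.

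Next, I would extract the target map. By condition $(i)$ of the previous theorem, $t(y_0,g)=(\zt(y_0),\zb(y_0,g))$ for some smooth $\zb\colon\cG_0\ti G\to G$. The key step is to apply the $G$-equivariance condition $(iv)$, which reads $\zb(y_0,gh)=\zb(y_0,g)h$ for all $h\in G$; setting $g=e$ and defining $\rmb(y_0):=\zb(y_0,e)$ then forces $\zb(y_0,g)=\rmb(y_0)g$, so $t(y_0,g)=(\zt(y_0),\rmb(y_0)g)$ as required. Conditions $(ii)$ and $(iii)$ translate into $\rmb(y_0y'_0)g=\rmb(y_0)\rmb(y'_0)g$ on composable pairs and $\rmb(\Id_{\rmp(x)})g=g$, that is, precisely (\ref{zb}) together with $\rmb(\Id_x)=e$, which says that $\rmb\colon\cG_0\to G$ is a groupoid morphism (viewing $G$ as a one-object groupoid).

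For the multiplication, I would argue as follows. The canonical projection $\zp\colon\cG\to\cG_0$ is a groupoid morphism, and in our identification it is the first-factor projection, so the first coordinate of $(y_0,g_1)(y'_0,g_2)$ must equal $y_0y'_0$. The defining property that the source of a product equals the source of its right factor, combined with $s(y'_0,g_2)=(\zs(y'_0),g_2)$, forces the second coordinate of the product to be $g_2$. Hence $(y_0,g_1)(y'_0,g_2)=(y_0y'_0,g_2)$, and composability $s(y_0,g_1)=t(y'_0,g_2)$ correctly reduces to $\zs(y_0)=\zt(y'_0)$ and $g_1=\rmb(y'_0)g_2$. The main content of the argument is the equivariance reduction in the second step, which collapses the abstract action $t$ of the previous theorem into the single groupoid morphism $\rmb$; the remaining verifications are essentially bookkeeping inherited from that theorem.
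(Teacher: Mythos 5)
Your proposal is correct and follows essentially the same route as the paper's (rather terse) justification: identify $\cG_0\times_{M_0}M$ with $\cG_0\ti G$, use $G$-equivariance of $t$ to collapse it to a left translation by $\rmb(y_0)$ (the paper's remark that a map commuting with all right translations is a left translation), and observe that conditions $(i)$--$(iv)$ reduce to $\rmb$ being a groupoid morphism, with the multiplication formula forced by the projection to $\cG_0$ being a morphism and by the source map. Your write-up merely makes explicit some bookkeeping the paper leaves implicit.
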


\section{Double principal bundles}

It is well known that every  principal bundle $\pi:\Pe\to M$ with the structure group $G$ acting
on $\Pe$ by (from the right)
$$\zr:\Pe\ti G\to \Pe\,,\quad (p,g)\mapsto pg\,,$$
induces a canonical Lie groupoid structure on $\cG=(\Pe\ti \Pe)/G$ being the space of orbits $\lan p,q\ran_G$ of
the canonical action of $G$ on $\Pe\ti \Pe$, $((p,q),g)\mapsto (pg,qg)$. The set of units is identified with $M$ embedded by $\zi_M(x)=\lan p_x,p_x\ran_G$, where $p_x$ is any element of $\Pe$ satisfying $\pi(p_x)=x$, and the partial multiplication reads
$$\lan p,q\ran_G\bullet\lan q,r\ran_G=\lan p,r\ran_G\,.$$

If one looks for a compatibility condition of two principal group actions, it is natural to expect that each action should induce a compatible action on the groupoid associated with the other action. Of course, as one should assume that each principal action is compatible with itself, it is impossible to assume that the other group action on the groupoid $(\Pe\ti \Pe)/G$ is principal, since it need not to be free. We will thus use a weaker condition and assume that the action is \emph{pre-principal}.
\begin{definition}
Let
$$\zr':\Pe\ti G'\to \Pe\,,\quad (p,g')\mapsto pg'\,,$$
be a principal action of another Lie group, $G'$, on $\Pe$. We say that the action $\zr'$ is \emph{$(\Pe,G)$-principal} if $\zr'$ induces a compatible pre-principal action of $G'$ on the groupoid $(\Pe\ti \Pe)/G$ by
\be\label{c1} \lan p,q\ran g'=\lan pg',qg'\ran\,.
\ee

We say that a two principal actions $\zr:\Pe\ti G\to \Pe$ and $\zr:\Pe\ti G'\to \Pe$ on the same total space $\Pe$ are \emph{compatible} if the action $\zr'$ is $(\Pe,G)$-principal and \emph{vice versa}, the action $\zr$ is $(\Pe,G')$-principal. A manifold $\Pe$ equipped with two compatible principal
actions we call a \emph{double principal bundle}.
\end{definition}
\begin{remark}
As we will see later, our definition of a double principal bundle is stronger than the definition introduced in \cite{Lang:2016} and  corresponds to the definition of a $\G$-DPB therein. We find this definition better motivated and better suited to the concept of associated bundles.
\end{remark}

\begin{theorem}\label{t1}
If $\Pe$ is a double principal bundle with respect to two principal actions $\zr, \zr'$ of Lie groups $G,G'$, respectively, then there is a canonical structure of a Lie group on $\G=\{ \zr_g\zr_{g'}\,|\, g\in G\,, \ g'\in G'\}$ such that $G\simeq \{ \zr_g\,|\, g\in G\}$  and $G'\simeq\{\zr'_{g'}\,|\, g'\in G'\}$ are closed normal Lie subgroups of $G$.

Moreover, the obvious action of $\G$ on $\Pe$ is principal and induces principal actions of $[G]=G/G_0$ and $[G']=G'/G_0$, with $G_0=G\cap G'$, on $M'=\Pe/G'$ and $M=\Pe/G$, respectively, such that the commutative diagram of canonical maps,
\be\label{dgr}\begin{xy}
(0,20)*+{\Pe}="a"; (20,20)*+{M'}="b";%
(0,0)*+{M}="c"; (20,0)*+{M_{0}}="d";%
{\ar "a";"b"}?*!/_2mm/{\pi'};
{\ar "a";"c"} ?*!/^2mm/{\pi} ;
{\ar "b";"d"}?*!/_3mm/{{[\pi]}} ;  %
{\ar "c";"d"}?*!/_3mm/{[\pi']};
\end{xy}\,,
\ee
with $M_0=\Pe/\G=M/[G']=M'/[G]$, consists of principal bundle morphisms: $G$-principal bundles for the horizontal maps, and
$G'$-principal bundles for the vertical ones.
\end{theorem}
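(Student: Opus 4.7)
My plan is to decode both compatibility hypotheses as the statement that $G$ and $G'$, viewed as (faithfully acting, by freeness of $\zr,\zr'$) subgroups of $\mathrm{Diff}(\Pe)$, normalize each other; then $\G$ is recovered as the subgroup they jointly generate, and its Lie-group structure together with principality of its action on $\Pe$ will fall out of the principal quotient actions on $M$ and $M'$ furnished by Proposition~\ref{pro1}. Concretely, I first unpack (\ref{c1}): well-definedness of $\lan p,q\ran g'=\lan pg',qg'\ran$ on $(\Pe\ti \Pe)/G$ requires that whenever $(p,q)\sim(pk,qk)$ for $k\in G$ one has $(pg',qg')\sim(pkg',qkg')$, and freeness of the $G$-action forces, for every $k\in G$ and $g'\in G'$, the existence of a unique $k^{g'}\in G$ with $\zr'_{g'}\zr_k=\zr_{k^{g'}}\zr'_{g'}$. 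Hence $G'$ normalizes $G$ inside $\mathrm{Diff}(\Pe)$; the symmetric hypothesis yields the converse, so $\G:=GG'=G'G$ is an abstract subgroup of $\mathrm{Diff}(\Pe)$ in which $G$ and $G'$ are both normal.

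Next I set $G_0:=G\cap G'$ inside $\mathrm{Diff}(\Pe)$. A short freeness argument, varying $(p,q)$ over $\Pe\ti \Pe$, shows that the kernel of the $G'$-action on the whole groupoid $(\Pe\ti \Pe)/G$ consists precisely of those $g'\in G'$ with $\zr'_{g'}=\zr_h$ for a single $h\in G$ — that is, of $G_0$ viewed inside $G'$. Proposition~\ref{pro1} then guarantees that $G_0$ is a closed normal Lie subgroup of $G'$ and that $[G']:=G'/G_0$ acts principally on $M=\Pe/G$. Symmetrically $G_0\subset G$ is closed normal and $[G]:=G/G_0$ acts principally on $M'=\Pe/G'$, and the two inclusions are identified inside $\mathrm{Diff}(\Pe)$ by the canonical isomorphism $\zvy:G_0\subset G\xrightarrow{\sim}G_0\subset G'$ determined by $\zr_h=\zr'_{\zvy(h)}$.

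Finally I form the surjection $\mu:G\ti G'\to\G$, $(g,g')\mapsto\zr_g\zr'_{g'}$, and observe that $\mu(g_1,g'_1)=\mu(g_2,g'_2)$ iff $(g_2,g'_2)=(g_1 h,\zvy(h)^{-1}g'_1)$ for a unique $h\in G_0$; so the fibres of $\mu$ are orbits of the free right $G_0$-action $(g,g')\cdot h=(gh,\zvy(h)^{-1}g')$ on $G\ti G'$, which is proper since $G_0$ is closed in $G$. Hence $\G\cong(G\ti G')/G_0$ inherits a canonical smooth-manifold structure, and the group law inherited from $\mathrm{Diff}(\Pe)$ takes the form
\[
(\zr_{g_1}\zr'_{g'_1})(\zr_{g_2}\zr'_{g'_2}) \;=\; \zr_{g_1\,g_2^{g'_1}}\,\zr'_{g'_1 g'_2},
\]
so smoothness of multiplication reduces to smoothness of the conjugation $(g,g')\mapsto g^{g'}$, which is read off pointwise from $p\cdot g^{g'}=(\zr'_{g'}\zr_g\zr'_{g'^{-1}})(p)$ together with freeness of $\zr$; this yields the required Lie group structure on $\G$, with $G,G'$ as closed normal Lie subgroups. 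Principality of $\G\curvearrowright\Pe$ is then immediate: if $\zr_g\zr'_{g'}(p)=p$, projecting to $M$ shows that $[g']\in[G']$ fixes $[p]\in M$, so $g'\in G_0$ by principality of $[G']\curvearrowright M$, and then $\zr'_{g'}=\zr_{\zvy^{-1}(g')}$ forces $g\,\zvy^{-1}(g')=1$, making the element trivial; properness is inherited from that of $G$ on $\Pe$ and of $[G']$ on $M$. The diagram (\ref{dgr}) is then automatically a commutative square of principal bundles with the indicated structure groups. The main technical obstacle I foresee is this last step: since the diagonally embedded $G_0$ is in general \emph{not} normal in $G\ti G'$, the quotient $(G\ti G')/G_0$ is a priori only a smooth manifold, and the Lie-group structure on $\G$ has to be extracted from the explicit multiplication formula above and the pointwise smoothness of $(g,g')\mapsto g^{g'}$ produced in the first step; everything else is either an immediate application of Proposition~\ref{pro1} or a formal diagrammatic consequence.
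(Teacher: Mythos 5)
Your overall strategy tracks the paper's quite closely --- extracting the conjugation relation $\zr'_{g'}\circ\zr_g=\zr_{g_{g'}}\circ\zr'_{g'}$ from well-definedness of (\ref{c1}) together with freeness of $\zr$, identifying the kernel of the induced $G'$-action on $(\Pe\ti \Pe)/G$ with $G_0=G\cap G'$, and deducing freeness of the $\G$-action by projecting to $M$ --- but two points need attention. First, the ``main technical obstacle'' you flag at the end (non-normality of the twisted diagonal copy of $G_0$ in the direct product $G\ti G'$) is exactly what the paper's construction avoids: instead of the direct product it forms the semidirect product $G'\ltimes G$ with multiplication $(g',g)(g'_1,g_1)=(g'g'_1,g_{g'_1}g_1)$, for which $(g',g)\mapsto\zr'_{g'}\circ\zr_g$ is a group homomorphism into $\mathrm{Diff}(\Pe)$; its kernel is precisely $G_0$, which is therefore a closed \emph{normal} subgroup of $G'\ltimes G$, and $\G=(G'\ltimes G)/G_0$ is a Lie group with no further work. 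Your route can be completed, but the smoothness of the quotient group law does not simply ``fall out,'' and the semidirect-product packaging dissolves the issue.

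Second, and this is the genuine gap: properness of the $\G$-action is not ``inherited'' or ``immediate'' from properness of $G$ on $\Pe$ and of $[G']$ on $M$; it is the most substantial step of the paper's proof. The difficulty is that knowing $[g']$ ranges over a compact subset of $[G']=G'/G_0$ gives no control over a representative $g'\in G'$ unless representatives can be chosen in a uniformly relatively compact way. The paper proves a dedicated Lemma \ref{le1} (a Borel cross section $\zs$ of a principal bundle carrying compact sets to relatively compact sets), applies it to $G'\to[G']$, writes $g'=\zs([g'])g_0$ with $g_0\in G_0\subset G$, and only then invokes properness of the $G$-action on the compact closure of $K\cdot\zs\big(K_M([G'])\big)$ to trap $g_0g$ in a compact set. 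Without this (or an equivalent local-section-plus-subsequence argument), your one-clause assertion of properness is unsupported, and the conclusion that $\Pe\to M_0$ is a principal $\G$-bundle does not follow. The remaining items --- freeness, the identification $\ker([\zr'])=G_0$, and the principal-bundle-morphism statements about (\ref{dgr}) --- match the paper's argument in substance.
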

The group $\G$ in the above proposition we will call the \emph{structure group} of the double principal bundle $\Pe$.

\begin{lemma}\label{le1}
If $\pi:\Pe\to M$ is a $G$-principal bundle, then there is a Borel cross section $\zs:M\to \Pe$ of $\pi$ such that $\zs(K)$ is relatively compact if $K\subset M$ is compact.
\end{lemma}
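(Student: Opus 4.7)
The strategy is to build the Borel section by piecing together local continuous sections on a well-chosen locally finite open cover, and then obtain relative compactness of the image of a compact set from local finiteness.

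First I would exploit that $\pi:\Pe\to M$ is a locally trivial principal bundle and that $M$, being a manifold, is paracompact and $\sigma$-compact. This allows me to choose a countable, locally finite open cover $\{W_i\}_{i\in\N}$ of $M$ by relatively compact open sets $W_i$ with the property that for each $i$ there is an open trivializing neighborhood $U_i\supset\overline{W_i}$ for $\pi$, together with a smooth (in particular continuous) local section $\zs_i:U_i\to\Pe$. Such sections exist over every trivializing open set because $\Pe|_{U_i}\cong U_i\times G$, and by shrinking one may assume $\overline{W_i}\subset U_i$, so that $\zs_i(\overline{W_i})$ is compact.

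Next I would turn this open cover into a Borel partition. Fix an enumeration of the $W_i$ and set
\[
B_1=W_1,\qquad B_i=W_i\setminus\bigcup_{j<i}W_j\quad(i\ge 2).
\]
Each $B_i$ is a Borel subset of $M$ (as a difference of open sets), the $B_i$ are pairwise disjoint, and $\bigcup_i B_i=M$. Define $\zs:M\to\Pe$ by $\zs(x)=\zs_i(x)$ whenever $x\in B_i$. Since $\pi\circ\zs_i=\id_{U_i}$, the map $\zs$ is a set-theoretic section. Borel measurability of $\zs$ follows from the fact that its restriction to each Borel piece $B_i$ agrees with the continuous map $\zs_i|_{B_i}$, so the preimage of any open $V\subset\Pe$ is the countable union $\bigcup_i\bigl(B_i\cap\zs_i^{-1}(V)\bigr)$, which is Borel.

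Finally I would check the compactness property. If $K\subset M$ is compact, then by local finiteness of $\{W_i\}$ only finitely many indices $i\in I_K$ satisfy $W_i\cap K\ne\emptyset$. Since $B_i\subset W_i$, only these indices contribute, and
\[
\zs(K)\;\subset\;\bigcup_{i\in I_K}\zs_i(B_i\cap K)\;\subset\;\bigcup_{i\in I_K}\zs_i(\overline{W_i}),
\]
which is a finite union of compact sets, hence compact. Therefore $\zs(K)$ is relatively compact in $\Pe$. The only delicate point is the initial choice of cover: one must arrange simultaneously local finiteness, relative compactness of each $W_i$, and the existence of a continuous local section defined on a neighborhood of $\overline{W_i}$; but this is standard for paracompact $M$ combined with local triviality of $\pi$.
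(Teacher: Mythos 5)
Your proof is correct and follows essentially the same route as the paper's: piece together local smooth sections over a countable locally finite cover via the Borel partition $W_i\setminus\bigcup_{j<i}W_j$, and use local finiteness to trap $\zs(K)$ in a finite union of compact sets. Your extra shrinking step $\overline{W_i}\subset U_i$ is a small improvement in rigour, since the paper bounds $\zs(K)$ by $\bigcup_j\zs_{i_j}(K\cap U_{i_j})$ without arranging that the closure of $K\cap U_{i_j}$ stays inside the domain of $\zs_{i_j}$.
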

\begin{proof}[Proof of Lemma]
Let $\{ U_i\}_{i\in\N}$ be a locally finite open covering of $M$ such that there is a local smooth cross section $\zs_i:U_i\to \Pe$, $i\in\N$. Define $\zs$ so that
$$\zs(m)=\zs_i(m)\quad \text{for} \quad m\in U_i\setminus \bigcup_{j=0}^{i-1}U_j\,.$$
The section $\zs$ is clearly a Borel section. Any compact $K\subset M$ intersects with only a finite number of $U_i$, say, $U_{i_1},\dots U_{i_k}$. Then $\zs(K)$ is contained in
$\zs_{i_1}(K\cap U_{i_1})\cup\cdots\cup\zs_{i_k}(K\cap U_{i_k})$, thus is relatively compact.

\end{proof}
\begin{remark}
One of the instances of the above result is the existence of the corresponding cross section
for the canonical projection of a Lie group $G$ onto the space $G/H$ of its cosets modulo
a closed subgroup $H$. Extensions to more general cases of groups can be found in \cite{Mackey:1952}, \cite{Palais:1961} and \cite{Kehlet:1984}.
\end{remark}

\begin{proof}[Proof of Theorem \ref{t1}]
Let $(p,q)\in \Pe\ti \Pe$ and $g'\in G'$. According to (\ref{c1}), for each $g\in G$ we have
$$\lan pg',qg'\ran_G=\lan p,q\ran_Gg'=\lan pg,qg\ran_Gg'=\lan pgg',qgg'\ran_G\,.$$
Hence, there is a uniquely determined element $g_{g'}$ such that
$pg'g_{g'}=pgg'$ and $qg'g_{g'}$,. In consequence, for each $g'\in G'$ and each $g\in G$ there is $g_{g'}\in G$ such that, for each $p\in \Pe$, we have
\be\label{c2}
pgg'=pg'g_{g'}\,,
\ee
or equivalently
\be\label{c2a}
q(g')^{-1}gg'=qg_{g'}\,.
\ee
It is now easy to see from (\ref{c2}) that $g'\mapsto(g\mapsto g_{g'})$ is an action of $G'$ on $G$ by group homomorphisms:
\be\label{c3}
g_{g'_1g'_2}=(g_{g'_1})_{g'_2}\quad\text{and} \quad (g_1g_2)_{g'}=(g_1)_{g'}(g_2)_{g'}\,.
\ee
It is also clear from (\ref{c2a}) that the map $G\ti G'\ni(g,g')\mapsto g_{g'}\in G$ is smooth.

Let now ${G'\ltimes G}=G'\ti G$ with the group multiplication
\be\label{c4}
(g',g)(g'_1,g_1)=(g'g'_1,g_{g'_1}g_1)\,.
\ee
The neutral element is the pair of neutral elements $(e',e)$ and the inverse reads
$$(g',g)^{-1}=((g')^{-1},g^{-1}_{(g')^{-1}})\,.$$
It is easy to see that $G$ and $G'$ are closed subgroups generating ${G'\ltimes G}$ and a direct inspection shows that $G$ is a normal subgroup. Actually, if $\Ad$ is the adjoint representation of ${G'\ltimes G}$, then
$\Ad_{g'}g=g_{(g')^{-1}}$ for $g\in G$ and $g'\in G'$.

Of course, changing the roles of $G$ and $G'$, we get an action $g\mapsto(g'\mapsto g'_{g})$ of $G'$ on $G$ by group homomorphisms and we can construct the corresponding group $\widehat{G'}$. Moreover, as
$$pgg'g^{-1}(g')^{-1}=p(g')_{g^{-1}}(g')^{-1}=pgg_{(g')^{-1}}^{-1}\,,
$$
we have the identity
$$(g')_{g^{-1}}(g')^{-1}=gg_{(g')^{-1}}^{-1}\,.$$

We have a canonical action of ${G'\ltimes G}$ on $\Pe$ given by $(g',g)\mapsto \zr'_{g'}\circ\zr_g$.
Indeed, according to (\ref{c2}), $(g',g)(g'_1,g_1)=(g'g'_1,g_{g'_1}g_1)$ acts as
$$\zr'_{g'g'_1}\circ\zr_{g_{g'_1}g_1}=\zr'_{g'}\zr'_{g'_1}\zr_{g_{g'_1}}\zr_{g_1}=
\zr'_{g'}\zr_{g}\zr'_{g'_1}\zr_{g_1}\,.
$$
The kernel the ${G'\ltimes G}$-action is the closed normal subgroup $G_0=\{(g',g)\in G'\ti G\,|\, \zr'_{g'}\circ\zr_g=\id\}$. Put $\G=({G'\ltimes G})/G_0$ and denote the coset of $(g',g)$ with $[g',g]$. Clearly, $\G$ is a Lie group acting on $\Pe$ in the obvious way, and $G,G'$ can be identified canonically with Lie subgroups of $G$. Moreover, $G$ and $G'$ are normal subgroups of $\G$ and
$$\Ad_{g'}g=g_{(g')^{-1}}\,,\quad \text{and}\quad \Ad_{g}g'=g'_{g^{-1}}$$
for $g\in G$ and $g'\in G'$.

Let us see that the action of $\G$ is free. For, let $p_0[g',g]=p_0$ for some $p_0\in \Pe$. This means that $p_0g'=p_0g^{-1}$.
This implies that
$\lan p_0g',p_0g'\ran=\lan p_0g_{-1},p_0g_{-1}\ran=\lan p_0,p_0\ran$ so that $\lan p_0,p_0\ran$ is a fixed point of the action induced on $(\Pe\ti \Pe)/G$ by $G'$. As this action is $(\Pe\ti \Pe)/G$-principal, $g'\in\ker([\zr'])$, i.e. $\lan pg',qg'\ran=\lan p,q\ran$ for all $p,q\in \Pe$, so that $\zr'_{g'}$ acts in fibers of $\pi:\Pe\to M$. In particular,
$$\lan p_0,q\ran=\lan p_0,q\ran g'=\lan p_0g^{-1},qg'\ran\,,$$
so that $qg'=qg^{-1}$ for all $q\in \Pe$, thus $\zr'_{g'}\circ\zr_g=\id$.

Actually, what we have proved is also
\be\label{c5} G_0=\ker([\zr'])=G'\cap G
\ee
where $G'\cap G$ is the intersection of $G'$ with $G$ as subgroups of $\G$. Note that, since $G,G'$ generate $\G$, the subgroup $G_0$ normal in $\G$ and $\G/G\simeq [G']$. Note also that $G$ and $G'$ are closed normal subgroup of $\G$. Indeed, $G$ can be characterized as the set of elements of $\G$ which preserve (closed) fibers of $\pi$. For, if $g'\in G'$ preserves the fibers of $\pi$, then $g'\in\ker(\zr')$, so according to (\ref{c5}) $g'\in G$.

We will prove now that the action of $\G$ on $\Pe$ is principal. Let $K$ be a compact subset
of $\Pe$ and $K_M=\pi(K)$ and $K_{M'}=\pi'(K)$ be its projection onto $M$ and $M'$, respectively.
As $K_{M}$ is compact and the $[G']$-action on $M$,
\be\label{c6}\pi(p)[g']=\pi(pg')\,,
\ee
is principal,
$$K_M([G'])=\{[g']\in[G']\,|\, \ (K_{M})[g']\cap K_{M}\ne\emptyset\}
$$
is compact. According to Lemma, there is a Borel cross section $\zs$ of $\pr:G'\to[G']=G'/G_0$
such that ${K^\zs_M}([G'])=\zs(K_M([G']))$ is relatively compact. We can also assume that $\zs([e'])=e'$, Hence, the closure $K'$ of
$KK^\zs_M([G'])$ is compact in $\Pe$. Consequently, as the action of $G$ is principal,
$$(K\cdot K^\zs_M)([G'])=\{[g]\in[G]\,|\, \ (K\cdot K^\zs_M([G']))[g]\cap (K\cdot K^\zs_M([G']))\ne\emptyset\}
$$
is compact, thus
$$K^\zs_M([G'])\cdot (K\cdot K^\zs_M)([G'])\subset\G$$
is compact.

Suppose that $K[g',g]\cap K\ne\emptyset$. Since $\pi(pg'g)=\pi(pg')=\pi(p)[g']$, we have
$$K_{M}[g']\cap K_{M}\ne\emptyset\,,
$$
so that $[g']\in K_M([G'])$. In consequence, there is $g_0\in G_0$ such that
$g'=\zs([g'])g_0$ and
$$Kg'g=K\zs([g'])(g_0g)\subset KK^\zs_M([G'])(g_0g)$$
intersects $K$, thus $KK^\zs_M([G'])$. Hence,
$g_0g\in (K\cdot K^\zs_M)([G'])$ and
$$g'g=\zs([g'])(g_0g)\in K^\zs_M([G'])\cdot (K\cdot K^\zs_M)([G'])\,.$$
This shows that
$$(\G,K)=\{[g',g]\in\G\,|\, \ K[g',g]\cap K\ne\emptyset\}$$ is a subset of $K^\zs_M([G'])\cdot (K\cdot K^\zs_M)([G'])$, thus compact, and proves that the $\G$-action on $\Pe$ is proper.

Finally, as $\zp(p[g',g])=\pi(pg'g)=\pi(p)[g']$ and $\zp'(p[g',g])=\pi'(pg'g)=\pi'(pg')[g]=\pi'(p)[g]$, the projections $\pi,\pi'$ are morphisms of the corresponding principal bundles.

\end{proof}

\section{Double principal groups and their actions}
The group $\G$ which appears in the formulation of Theorem \ref{t1} is a Lie group with two distinguished closed normal subgroups (which are automatically Lie subgroups) $G,G'$ such that $G\cup G'$ generates $\G$. The triples $\wG=(\G;G,G')$ we will call \emph{double principal groups}. This concept coincides with the concept of a \emph{double Lie group} in \cite{Lang:2016}. Indeed, the triple of Lie groups $\widehat{\G}=(\G;G,G')$ as above induces the
exact sequence of group homomorphisms
\be\label{c7} 1\raa G_0\raa\G\overset{\phi}{\raa} [G]\ti[G']\,,
\ee
where $G_0=G\cap G'$, $[G]=G/G_0$, and $[G']=G/G_0$. Conversely, the exact sequence (\ref{c7})
gives rise to the triple $\left(\G,\phi^{-1}(\{ e\}\ti[G']),\phi^{-1}([G]\ti \{ e\})\right)$.
We prefer the term ``double principal group" to prevent any confusion with double Lie groups of Lu and Weinstein \cite{Lu:1990} (cf. also \cite{Mackenzie:1992}) or Drinfel'd doubles.

By a \emph{morphism of double principal groups} $\wG_i=(\G_i,G_i,G'_i)$, $i=1,2$, we understand a group homomorphism $\phi:\G_1\to \G_2$ such that $\phi(G_1)\subset G_2$ and $\phi(G'_1)\subset G'_2$. A double principal group is called \emph{vacant} if the normal subgroups $G$ and $G'$ intersect trivially, i.e. $G\cap G'=\{ e\}$.

\begin{proposition}
A double principal group $\widehat{\G}=(\G;G,G')$ is vacant if and only if the map
$$m: G\ti G'\ni(g,g')\mapsto gg'\in\G$$ is a diffeomorphism.
\end{proposition}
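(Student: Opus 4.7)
The strategy is to prove the equivalence in two halves. The reverse direction is essentially a tautology (if $m$ is a bijection and $h \in G\cap G'$, then $m(h,e') = h = m(e,h)$ forces $h=e$). All the content is in the forward direction, and the key observation that unlocks it is that under the vacancy hypothesis $G\cap G' = \{e\}$, combined with the normality of both $G$ and $G'$ in $\G$, the subgroups $G$ and $G'$ automatically commute element-wise; this promotes $m$ from a bare set-theoretic map into a Lie group homomorphism and lets me borrow the standard theory.

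Here is how I would proceed. First, given $g\in G$ and $g'\in G'$, consider the commutator $[g,g'] = gg'g^{-1}(g')^{-1}$. Writing it as $g\cdot(g'g^{-1}(g')^{-1})$ and using the normality of $G$ shows $[g,g']\in G$; writing it as $(gg'g^{-1})(g')^{-1}$ and using the normality of $G'$ shows $[g,g']\in G'$. Hence $[g,g'] \in G\cap G' = \{e\}$, so $gg' = g'g$. Second, with commutativity in hand, a direct check gives
\[
m\bigl((g_1,g'_1)(g_2,g'_2)\bigr) \;=\; g_1g_2g'_1g'_2 \;=\; g_1g'_1g_2g'_2 \;=\; m(g_1,g'_1)\,m(g_2,g'_2),
\]
so $m\colon G\times G' \to \G$ is a smooth group homomorphism. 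Third, I check bijectivity: injectivity is immediate since $g_1g'_1 = g_2g'_2$ yields $g_2^{-1}g_1 = g'_2(g'_1)^{-1} \in G\cap G' = \{e\}$; surjectivity follows because $GG'$ is a subgroup (thanks again to commutativity) containing the generating set $G\cup G'$, hence equals $\G$.

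The remaining step is to upgrade the bijective Lie group homomorphism $m$ to a diffeomorphism, and this is the main point where one has to be slightly careful. The differential $dm_{(e,e')} \colon \mathfrak{g}\oplus\mathfrak{g}' \to \mathrm{Lie}(\G)$, $(X,X')\mapsto X+X'$, is injective because $G\cap G'=\{e\}$ forces $\mathfrak{g}\cap\mathfrak{g}'=0$; hence $m$ is an immersion at $(e,e')$ and, by $G\times G'$- and $\G$-equivariance of $m$, an immersion everywhere. In particular $\dim(G\times G') \le \dim \G$. For the reverse inequality, $m$ is surjective onto the manifold $\G$, so Sard's theorem forbids $\dim(G\times G') < \dim\G$. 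Thus $m$ is a bijective immersion between manifolds of equal dimension, hence a local diffeomorphism everywhere, hence a diffeomorphism. The main (though entirely standard) obstacle is precisely this last passage from bijective homomorphism to diffeomorphism, which is where the structure of Lie groups, rather than merely abstract algebra, enters.
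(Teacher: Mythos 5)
Your argument is correct, but it reaches the conclusion by a genuinely different route than the paper's. The paper proves that $m$ is a local diffeomorphism by computing $D_{(g,g')}m$ directly on left-invariant vector fields, obtaining $(gg')_*\big(\Ad_{(g')^{-1}}X+X'\big)$ and noting that $X+X'\mapsto \Ad_{(g')^{-1}}X+X'$ is an automorphism of $\bg=\mg\oplus\mg'$; it never remarks that $G$ and $G'$ commute, and it takes the decomposition $\bg=\mg\oplus\mg'$ for granted. You instead make the structural observation, via the commutator $[g,g']\in G\cap G'=\{e\}$ and the normality of both subgroups, that $G$ and $G'$ commute elementwise, so that $m$ becomes a smooth bijective group homomorphism $G\ti G'\to\G$, and you then conclude by the standard mechanism for such maps: constant rank (immersion at the identity propagated by equivariance) plus a Sard-type dimension count. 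Your route buys two things: it reduces the analytic content to the familiar fact that a bijective smooth homomorphism of Lie groups is an isomorphism, and your Sard argument actually justifies the equality $\dim(G\ti G')=\dim\G$, i.e. $\mg+\mg'=\bg$, which the paper asserts implicitly when it writes $\bg=\mg\oplus\mg'$ without proof. The paper's computation, in exchange, is self-contained in a single explicit formula for the differential at an arbitrary point and needs no measure-theoretic input. Both proofs are complete; yours is, if anything, slightly more careful on the dimension count, and the treatment of the converse direction (injectivity of $m$ forces $G\cap G'=\{e\}$) agrees with the paper's one-line dismissal.
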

\begin{proof} If $G\cap G'=\{ e\}$, then the map $m$ is clearly smooth and injective. As it is also surjective, it is a smooth bijection. It remains to show that $m$ is a local diffeomorphism.
Let $\bg=\mg\oplus\mg'$ be the decomposition of the Lie algebra $\bg$ of $\G$ into a direct sum of the Lie algebras of $G$ and $G'$. Let $X\in\mg$, $X'\in\mg'$ and $\widehat{X}$ and $\widehat{X'}$ be the corresponding left-invariant vector fields on $\G$. We have
\beas &D_{(g,g')}m\left(\widehat{X}(g),\widehat{X'}(g')\right)=\frac{\xd}{\xd t}_{|t=0}(g\exp(tX)g'\exp(tX'))=\\
&=\frac{\xd}{\xd t}_{|t=0}(gg'\exp(t\Ad_{(g')^{-1}}X)\exp(tX'))
=(gg')_*({\Ad_{(g')^{-1}}X+X'})\,.
\eeas
As $X+X'\mapsto \Ad_{(g')^{-1}}X+X'$ is an isomorphism of $\bg$ for any $g'\in G'$, this shows that the derivative $D_{(g,g')}m$ of $m$ is an isomorphism, thus $m$ is a diffeomorphism.
The converse is trivial.

\end{proof}

\begin{theorem}
Let $\widehat{\G}=(\G;G,G')$ be a double principal group. Then, any principal action $r:\Pe\ti\G\to \Pe$ induces on $\Pe$ a double principal structure relative to the actions $\zr=r_{|G}$, $\zr'=r_{|G'}$.
\end{theorem}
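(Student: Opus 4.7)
The approach is to verify the two compatibility conditions directly, exploiting the normality of $G$ and $G'$ in $\G$ together with the fact that $G\cup G'$ generates $\G$. Since $G$ and $G'$ are closed subgroups of $\G$ and the $\G$-action $r$ is free and proper, the restricted actions $\zr$ and $\zr'$ are automatically principal, giving the principal bundles $\pi:\Pe\to M$ and $\pi':\Pe\to M'$. By the symmetry between the roles of $G$ and $G'$, it suffices to show that $\zr'$ is $(\Pe,G)$-principal; the analogous statement for $\zr$ then follows by exchanging the two subgroups.

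The formula $\lan p,q\ran_G\cdot g'=\lan pg',qg'\ran_G$ is well-defined on $(\Pe\ti\Pe)/G$ by the normality of $G$: for any $g\in G$ one has $pgg'=pg'\cdot(g'^{-1}gg')$ with $g'^{-1}gg'\in G$, so the orbits $\lan pg',qg'\ran_G$ and $\lan pgg',qgg'\ran_G$ coincide. Each $\zr'_{g'}$ manifestly respects source, target and partial multiplication of the groupoid $(\Pe\ti\Pe)/G\rightrightarrows M$, giving a compatible $G'$-action. To invoke Proposition \ref{pro1}, I identify the kernel of this action as $G_0=G\cap G'$: if $\lan pg',qg'\ran_G=\lan p,q\ran_G$ for all $p,q$, then for any $p$ there is $h\in G$ with $pg'=ph$, whence $g'=h\in G$ by freeness of $r$. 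It remains to prove that the induced action of $[G']=G'/G_0$ on $M=\Pe/G$ is principal.

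The crucial observation is that $[G']$ is canonically isomorphic as a Lie group to $\G/G$. Indeed, normality of both $G$ and $G'$ makes $G'G$ a subgroup of $\G$, which, since $G\cup G'$ generates $\G$, equals the whole of $\G$. The natural homomorphism $G'/G_0\to\G/G$, induced by the composition $G'\hookrightarrow\G\twoheadrightarrow\G/G$, is therefore a smooth bijective Lie group homomorphism, hence a Lie group isomorphism. Under this identification, the $[G']$-action on $M$ coincides with the residual $(\G/G)$-action on $\Pe/G$ arising from the $\G$-action on $\Pe$, which is principal because the tower $\Pe\to M\to M_0=\Pe/\G$ realises $M\to M_0$ as a principal bundle with structure group $\G/G$. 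This concludes the verification; the main conceptual step I anticipate is the identification of $[G']$ with $\G/G$ via the generating condition on $G\cup G'$, after which principality of the quotient action is a general consequence of the factorisation of a principal action through a closed normal subgroup of the structure group (properness descends via a Borel section argument of the kind used in the proof of Theorem \ref{t1}).
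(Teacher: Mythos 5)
Your proof is correct and follows essentially the same route as the paper's: well-definedness of the induced action on $(\Pe\ti\Pe)/G$ via normality, identification of the kernel with $G_0=G\cap G'$ and of $[G']$ with $\G/G$ using $\G=G'G$, reduction of principality to the induced action on the units, and descent of properness through the closed normal subgroup via the Borel cross-section of Lemma \ref{le1}. The only cosmetic differences are that you package the reduction to the units through Proposition \ref{pro1}, whereas the paper checks freeness directly on the groupoid and writes out the compatibility with the partial multiplication that you declare manifest.
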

\begin{proof}
Of course, the principal action of $\G$ on $\Pe$ induces principal actions of closed subgroups,
so that $\zr,\zr'$ are principal. Since $G,G'$ are normal subgroup, the adjoint action of $\G$ on itself induces the `dressing actions' of $G$ on $G'$ and \emph{vice versa}:
$$G\ti G'\ni(g,g')\mapsto g'_g=\Ad_{g^{-1}}g'\in G'\,\quad
G'\ti G\ni(g',g)\mapsto g_{g'}g=\Ad_{(g')^{-1}}g\in G\,,
$$
so that
\be\label{c8}
gg'=g'g_{g'}=g'_{g^{-1}}g\,,\quad g'g=gg'_g=g_{(g')^{-1}}g'\,.
\ee
In particular, as $\G$ is generated by $G\cup G'$,
$$\G=GG'=\{ gg'\,|\, \ g\in G\,,g'\in G'\}\quad\text{and}\quad\G=\{ g'g\,|\, \ g\in G\,,g'\in G'\}=G'G\,.
$$
Identities (\ref{c8}) imply also that $\zr$ and $\zr'$ induce canonical actions of $G$ and $G'$ on
$(\Pe\ti \Pe)/G'$ and $(\Pe\ti \Pe)/G$, respectively. The kernels of these actions coincide with $G_0=G\cap G'$, so that we get actions of $[G]=G/G_0=\G/G'$ and $[G']=G'/G_0=\G/G$ by
$$\lan p,q\ran_{G'}[g]=\lan pg,qg\ran_{G'}\,,\quad
\lan p,q\ran_G[g']=\lan pg',qg'\ran_G\,.
$$
These actions are free. Indeed, if $\lan p,q\ran_G[g']=\lan p,g\ran_G$, then there is $g\in G$
such that $pg'=pg$, $qg'=qg$. In particular, $p$ is a fixed point of $g'g^{-1}\in\G$, thus
$g'=g\in G_0$. Similarly we prove that the action of $[G']$ is free.

The action is clearly compatible with the groupoid structure: if $\lan p,q\ran_{G'}$ and $\lan p',q'\ran{G'}$ are composable, then there is $g'\in G'$ such that $q=p'g'$, so $qg=p'g'g=p'gg'_g$
and $\lan p,q\ran_{G'}[g]$ and $\lan p',q'\ran_{G'}[g]$ are composable. Moreover,
$$\lan p,q\ran_{G'}[g]\bullet\lan p',q'\ran_{G'}[g]=\lan pg,q'gg'_g\ran_{G'}=
\left(\lan p,q\ran_{G'}\bullet\lan p',q'\ran_{G'}\right)[g]\,.
$$

It remains to show that the actions are proper. Due to symmetry of conditions, it is enough to prove that the action of $[G]$ on $(\Pe\ti \Pe)/G'$ is proper. Note that $(\Pe\ti \Pe)/G'$ is a principal bundle over $M'=\Pe/G'$ and the induced action of $[G]$ on $M'$ is the same as induced from the action of $G$ on $\Pe$:
$$\pi'(p)[g]=\pi'(pg)\,.
$$
Clearly, it is enough to prove that the latter action is proper.
Let $K'$ be a compact subset of $M'$ and $\zs:M'\to \Pe$ be a section from Lemma \ref{le1}.
The set $K=\zs(K')$ is relatively closed on $\Pe$, so $K(\G)=\{ gg'\in\G\, |\, \ Kgg'\cap K\ne\emptyset\}$ is relatively compact in $\G$. If $[g]\in K'([G])=\{ [g]\in[G]\, |\, \ K'[g]\cap K'\ne\emptyset\}$, then there is $g'\in G'$ such that $gg'\in K(\G)$. Hence, $K'([G])$ is contained in $\pr(K(\G))$, where $\pr:\G\to[G]=\G/G'$ is the canonical projection. As $\pr(K(\G))$ is relatively compact, $K'([G])$ is relatively compact.

\end{proof}

\begin{definition} Double principal bundles described by the above theorem we will call $\widehat{\G}$-principal bundles. A \emph{morphism of $\widehat{\G_i}$-principal bundles $\Pe_i$}, $i=1,2$, is a smooth map $\Phi:\Pe_1\to\Pe_2$ which is equivariant with respect to a morphism of double principal groups $\phi:\G_1\to\G_2$, i.e. $\Phi(p\bar g)=\Phi(p)\phi(\bar g)$.
\end{definition}

In general, any $\G$-manifold, i.e. a manifold $\Pe$ with an action of $\G$, gives rise to the commutative diagram of maps between the spaces of orbits:
\be\label{dgr1}\begin{xy}
(0,20)*+{\Pe}="a"; (20,20)*+{\Pe/G'}="b";%
(0,0)*+{\Pe/G}="c"; (20,0)*+{\Pe/\G}="d";%
{\ar "a";"b"}?*!/_2mm/{\pi'};
{\ar "a";"c"} ?*!/^2mm/{\pi} ;
{\ar "b";"d"}?*!/_3mm/{{[\pi]}} ;  %
{\ar "c";"d"}?*!/_3mm/{[\pi']};
\end{xy}\,.
\ee
Conversely, if we have a commutative diagram $\widehat{\Pe}$ of (locally trivial) smooth fibrations
\be\label{dgr2}\widehat{\Pe}=\quad\begin{gathered}{\begin{xy}
(0,20)*+{\Pe}="a"; (20,20)*+{P'}="b";%
(0,0)*+{P}="c"; (20,0)*+{P_0}="d";%
{\ar "a";"b"}?*!/_2mm/{\pi'};
{\ar "a";"c"} ?*!/^2mm/{\pi} ;
{\ar "b";"d"}?*!/_3mm/{{[\pi]}} ;  %
{\ar "c";"d"}?*!/_3mm/{[\pi']};
\end{xy}}\end{gathered}\,,
\ee
on which $\G$ acts (from the left) in such a way that $G$ acts in fibers of $\pi$ and $G'$ acts in fibers of $\pi'$ (thus $G'$ acts on $P$ and $G$ acts on $P'$), then
we call $\widehat{\Pe}$ (or simply ${\Pe}$) a \emph{$\widehat{\G}$-fibered manifold}.
If $P_0$ is just one point, we speak about a \emph{$\widehat{\G}$-fibered space}.
\begin{example}
Consider a \emph{trivial $\wG$-fibered space} $\Pe=P\ti P'\ti P^{(1,1)}$ with coordinates $(y,y',z)$ and fibrations
$$\pi(y,y',z)=(y')\,,\quad \pi'(y,y',z)=(y)$$
on which the double principal group $\wG=(\G;G,G')$ acts by
$$\bar g(y,y',z)=(A(\bar g)(y),A'(\bar g)(y'),B(\bar g)(y,y',z))$$
such that $A(\bar g)$ is $\id$ for $\bar g\in G$ and $A'(\bar g)$ is $\id$ for $\bar g\in G'$.
\end{example}
\begin{example}\label{ex1}
More specifically, define for $\ol{d}=(d,d',d^0)\in\N^3$ the trivial double space $\A=\R^{\ol{d}}_{aff}=\R^{d}\ti \R^{d'}\ti \R^{d^0}$ with canonical projections $\zr:\A\to\R^d$ and $\zr':\A\to\R^{d'}$, which is viewed as a trivial \emph{double affine space} (cf. \cite{Grabowski:2010}), i.e. we view $\R^{d}, \R^{d'}, \R^{d^0}$ are affine spaces with affine coordinates $(y,y',z)$ and $\G$ is the group $\G=\Aut(\R^{\ol{d}}_{aff})$ of double affine space automorphisms, i.e. diffeomorphisms of the form
\bea\nn\bar g(y,y',z)&=&( \za^0_j(\bar g)+\za^i_j(\bar g)y_i,\,(\za')^0_b(\bar g)+(\za')^a_b(\bar g)y'_a,\\
&&
\zb^{00}_v(\bar g)+\zb^{i0}_v(\bar g)y_i+\zb^{0a}_v(\bar g)y'_a
+\zb^{ia}_v(\bar g)y_iy'_a+\zs^u_v(\bar g)z_u)\,.
\eea
Here $G$ and $G'$ act by automorphisms of the affine fibrations $(y,y',z)\to(y')$ and $(y,y',z)\to(y)$ which act trivially on $y$ and $y'$, respectively.
\end{example}
\begin{example}\label{ex2}
Similarly we can consider the case of a \emph{double vector space} $\V$ \emph{of dimension $\ol{d}$}, $\V=\R^{\ol{d}}_{vect}=\R^{d}\ti \R^{d'}\ti \R^{d^0}$, and its group of automorphisms $\G=\Aut(\R^{\ol{d}}_{vect})$ which acts by
\be\bar g(y,y',z)=\left(\za^i_j(\bar g)y_i,\,(\za')^a_b(\bar g)y'_a,\,
\zb^{ia}_v(\bar g)y_iy'_a+\zs^u_v(\bar g)z_u\right)\,,
\ee
with the action of the subgroups $G$ and $G'$ given by
\be g(y,y',z)=\left(y,\,(\za')^a_b(g)y'_a,\,
\zb^{ia}_v(g)y_iy'_a+\zs^u_v(g)z_u\right)\,,
\ee
and
\be g'(y,y',z)=\left(\za^i_j(g')y_i,\,y',\,
\zb^{ia}_v(g')y_iy'_a+\zs^u_v(g')z_u\right)\,.
\ee
Note that in this case the normal subgroup $G_0=G\cap G'$ of $\Aut(\V)=\Aut(\R^{\ol{d}}_{vect})$ contains another normal subgroup consisting of \emph{statomorphism}, i.e. the automorphism with all linear part vanishing:
\be g_0(y,y',z)=\left(y,\,y',\,
\zb^{ia}_v(g_0)y_iy'_a+z_v\right)\,.
\ee
For the theory of double vector bundles as vector bundles in the category of vector bundles we refer to the original papers by Pradines \cite{Pr1,Pr2,Pr3} (see also \cite{Konieczna1999,Mackenzie:1992}). A substantially simplified approach via \emph{homogeneity structures} can be found in \cite{Grabowski:2012}.
\end{example}

\section{$n$-tuple principal groups and bundles}
Our definition of double principal group and double principal bundle can be generalized to an $n$-tuple case, $n>2$.
The definition is inductive.

\begin{definition}
An \emph{$n$-tuple principal group} is a system $\wG=(\G;G^1\dots,G^n)$, where $\G$ is a Lie group and $G^i$, $i=1,\dots, n$ are its closed normal subgroups such $\cup_{i=1}^nG^i$ generates $G$ and any system $$\wG_i=(G^i;G^1\cap G^i,\dots,G^{i-1}\cap G^i,G^{i+1}\cap G^i,\dots,G^n\cap G^i)$$ is an $(n-1)$-tuple principal group. An \emph{$n$-tuple principal bundle} with the structure $n$-tuple principal group $\wG$ is a manifold $P$ equipped with a principal action of $\G$.
\end{definition}

Suppose $n\ge 2$. From the above definition we obtain easily the following.
\begin{proposition}
If $\wG=(\G,G^1\dots,G^n)$ is an $n$-tuple principal group, then any
triple $\wG^{ij}=(\G,G^i,G^j)$, $i\ne j$, is a double principal group.
Any $n$-tuple principal bundle $P$ with the structure group $\wG$ is a double principal bundle with respect to the action of each double principal group $\wG^{ij}$, $i\ne j$. Moreover, the bundle $P$  is a principal fibration with respect to all projections $\pi_i:P\to P_i=P/G^i$, where each $P_i$ is itself an $(n-1)$-tuple principal bundle with the structure group $\wG_i$.
\end{proposition}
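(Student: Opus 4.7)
The plan is to prove all three assertions together by induction on $n$, with base case $n=2$ covered by the double principal bundle theory developed in Section 4. For the first assertion, the subgroups $G^i$ and $G^j$ are closed and normal in $\G$ by hypothesis, so the only point to check is that $G^i\cup G^j$ generates $\G$. Since $\bigcup_{k=1}^n G^k$ does, it suffices to show $G^k\subseteq\langle G^i\cup G^j\rangle$ for every $k\notin\{i,j\}$. For such a $k$, the inductive hypothesis applied to the $(n-1)$-tuple principal group $\wG_k$ (the pair $i,j$ still being admissible there) yields the stronger fact that $(G^i\cap G^k)\cup(G^j\cap G^k)$ already generates $G^k$, which closes the first claim.

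The second assertion is immediate from the first combined with the theorem of Section 4 on $\wG$-principal bundles: since $\wG^{ij}$ is a double principal group acting principally on $P$ as part of the $\G$-action, $P$ inherits a double principal bundle structure with respect to $G^i$ and $G^j$. For the third assertion, $G^i$ is closed and normal in $\G$ and acts principally on $P$, so $P_i=P/G^i$ is smooth, $\pi_i$ is a principal $G^i$-bundle, and the Lie quotient $\G/G^i$ acts principally on $P_i$. The closed normal subgroups $\overline{G^k}:=G^k G^i/G^i$ of $\G/G^i$, for $k\neq i$, have union generating $\G/G^i$, because $\bigcup_k G^k$ generates $\G$ while $\overline{G^i}$ is trivial. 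It remains to verify the iterated condition, namely that for each $k\neq i$ the system $(\overline{G^k};\,(\overline{G^\ell}\cap\overline{G^k})_{\ell\neq i,k})$ is an $(n-2)$-tuple principal group.

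This last step is where I expect the main obstacle. Applying the inductive hypothesis to $\wG_k$ together with its subgroup $G^k\cap G^i$ yields an $(n-2)$-tuple principal group structure on $G^k/(G^k\cap G^i)\cong\overline{G^k}$, whose subgroups are the images of $G^\ell\cap G^k$, namely $(G^\ell\cap G^k)G^i/G^i$. In general these are only contained in $\overline{G^\ell}\cap\overline{G^k}$ and need not equal it, since $(G^\ell G^i)\cap(G^k G^i)$ may strictly exceed $(G^\ell\cap G^k)G^i$. Matching these two candidate structures---or, alternatively, directly verifying the $(n-2)$-tuple principal group axioms for the larger subgroups $\overline{G^\ell}\cap\overline{G^k}$---is the crux of the argument, and the normality of all subgroups together with the generation of $G^k$ by its intersections with the remaining $G^\ell$'s should supply enough leverage to conclude, closing the induction.
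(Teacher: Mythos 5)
Your argument for the first assertion coincides with the paper's own proof, which is in fact the \emph{only} thing the paper proves: by induction, $(G^i\cap G^k)\cup(G^j\cap G^k)$ generates $G^k$ for every $k\notin\{i,j\}$, hence $G^i\cup G^j$ generates $\G$, and the authors then simply declare that ``the proposition follows.'' Your reduction of the second assertion to the theorem on principal actions of double principal groups is the intended (if unstated) route, so up to that point the proposal is correct and essentially identical to the paper.

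On the third assertion you go further than the paper does, and the obstacle you flag is real. First, note that the statement as printed cannot be read literally: $\wG_i$ has total group $G^i$, which acts trivially on $P_i=P/G^i$, so the intended structure group must be $\G/G^i$ with the normal subgroups $\overline{G^k}=G^kG^i/G^i$ (and correspondingly the fibration $\pi_i$ is $G^i$-principal, which does follow from properness and freeness of the restricted action plus closedness of $G^i$). Second, the discrepancy you identify between $(G^\ell\cap G^k)G^i/G^i$ and $\overline{G^\ell}\cap\overline{G^k}$ is a genuine group-theoretic issue --- for normal subgroups the inclusion $(A\cap B)C\subseteq (AC)\cap(BC)$ can be strict --- and your closing sentence (``should supply enough leverage to conclude'') is an expectation, not a proof, so the induction is not actually closed. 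You have not missed an argument that the paper supplies: the paper offers nothing on this point, so either this verification must be carried out (or the definition of the induced $(n-1)$-tuple structure on $P_i$ adjusted to use the image subgroups $(G^\ell\cap G^k)G^i/G^i$, for which your inductive hypothesis does apply), and as it stands your proposal, like the paper, leaves the third claim unproved.
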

\begin{proof}
It is enough to prove that, for each $i,j=1,\dots, n$, $i\ne j$, the set  $G^i\cup G^j$ generate $\G$.
For $n=2$ this is true by definition, so by the inductive assumption, each $G^l$ is generated by $G^i\cup G^j$.
Because $\cup_{l=1}^nG^l$ generate $\G$, the proposition follows.
\end{proof}
\begin{example}
Example \ref{ex2} can be generalized to the $k$-tuple case if we start with a \emph{$k$-tuple vector bundle bundle}, i.e.
a manifold $E$ equipped with coordinate systems
$(y_a^\zs)_{\zs\in\{ 0,1\}^k_*}$ which yield diffeomorphisms of members of an open cover of $E$ onto $U\ti\Pi_{\zs\in\{ 0,1\}^k}\R^{d_\zs}$, where $U\subset\R^{d_{\mathbf 0}}$,
and transformation rules
$$x'^{\mathbf 0}_a=\phi_a(x^{\mathbf 0})\,,\quad x'^{\zs}_b=\sum_{\zs=\zs_1+\cdots+\zs_m}g_{b,\zs_1,\dots,\zs_m}^{b_1,\dots,b_m}(x^{\mathbf 0})x^{\zs_1}_{b_1}\cdots x^{\zs_m}_{b_m}\,.
$$
Here, ${\mathbf 0}=(0,\dots,0)\in\{ 0,1\}^k$ and $\{ 0,1\}^k_*=\{ 0,1\}^k\setminus\{\mathbf 0\}$ (cf. \cite{Grabowski:2006}). It is easy to see that $E$ is a polynomial fibration over some manifold $M$ (with local coordinates $(x^{\mathbf 0}_a)$) and fibers $E_{x^{\mathbf 0}}$ being  \emph{$k$-tuple vector bundle spaces} (with local coordinates $(x^\zs_a)_{\zs\ne{\mathbf 0}}$). All fibers are isomorphic $k$-tuple vector spaces,
\be\label{ktvb}E_{x^{\mathbf 0}}\simeq E_0=\Pi_{\zs\in\{ 0,1\}^k_*}\R^{d_\zs}
\ee
with the automorphism group $\G=\Aut(E_0)$ acting by
$$x'^{\zs}_b=\sum_{\zs=\zs_1+\cdots+\zs_m}g_{b,\zs_1,\dots,\zs_m}^{b_1,\dots,b_m}x^{\zs_1}_{b_1}\cdots x^{\zs_m}_{b_m}\,,\quad \zs\ne{\mathbf 0}\,.
$$
It is easy to see that $\G$ acts linearly in factors $\R^{d_{\ze_i}}$ in $\Pi_{\zs\in\{ 0,1\}^k_*}\R^{d_\zs}$, $\ze_i=(0,\dots,0,1,0,\dots,0)\in\{ 0,1\}^k$. The following observation is trivial.
\begin{proposition}\label{p1}
The collection $\wG=(\G,G^1\dots,G^k)$, where $G^i$ is the subgroup of $\G$ acting identically on the factor $\R^{d_{\ze_i}}$ in $\Pi_{\zs\in\{ 0,1\}^k_*}\R^{d_\zs}$, is a $k$-tuple principal group.
\end{proposition}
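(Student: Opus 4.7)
The plan is to proceed by induction on $k$, the base case $k=2$ being Example \ref{ex2} itself: there one checks directly that $G$ and $G'$ are the kernels of the natural linear representations of $\Aut(\R^{\ol d}_{vect})$ on the first and second coordinate factors (hence closed and normal), and that their union generates $\Aut(\R^{\ol d}_{vect})$ because every automorphism factors as a ``pure first-factor'' transformation composed with a ``pure second-factor'' transformation composed with a statomorphism, the latter lying in $G\cap G'$.

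For the inductive step I verify in sequence the three defining conditions of a $k$-tuple principal group. The first condition, that each $G^i$ is closed and normal, rests on the observation that $\ze_i\in\{0,1\}^k$ admits no decomposition $\ze_i=\zs_1+\cdots+\zs_m$ with $m\ge 2$ and all $\zs_j\in\{0,1\}^k_*$, since any such sum would have $\ge 2$ ones. Consequently the transformation formula forces every $g\in\G$ to act linearly on the factor $\R^{d_{\ze_i}}$, yielding a Lie group homomorphism $\zr_i:\G\to GL(\R^{d_{\ze_i}})$ whose kernel is precisely $G^i$.

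Second, $\bigcup_{i=1}^k G^i$ generates $\G$. For $g\in\G$, set $A_i:=\zr_i(g)$ and construct ``atomic'' automorphisms $\bar A_i\in\G$ acting as $A_i$ on $\R^{d_{\ze_i}}$, as the identity on every $\R^{d_{\ze_j}}$ with $j\ne i$, and as the identity on all higher-weight factors $\R^{d_\zs}$ with $|\zs|\ge 2$. Each such $\bar A_i$ preserves every homogeneity structure, hence is a bona fide element of $\G$; by construction $\bar A_i\in G^j$ for every $j\ne i$. The composite $\bar A_1\cdots\bar A_k$ shares the linear parts of $g$, so the correction $h:=g\cdot(\bar A_1\cdots\bar A_k)^{-1}$ satisfies $\zr_j(h)=I$ for all $j$: it is a statomorphism (in the sense of Example \ref{ex2}) fixing every $x^{\ze_j}$, and therefore lies in $\bigcap_{j=1}^k G^j$. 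This writes $g$ as a product of elements from $\bigcup_j G^j$.

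The main obstacle is the third condition, that each sub-system $\wG_i=(G^i;\,G^j\cap G^i:j\ne i)$ is itself a $(k-1)$-tuple principal group. My strategy is to realize $G^i$ as (canonically isomorphic to) the automorphism group of a $(k-1)$-tuple vector structure in which $\R^{d_{\ze_i}}$ plays the role of an inert ``parameter space'': the indices $\zs$ with $\zs_i=0$ form a genuine $(k-1)$-tuple vector substructure on $\Pi_{\zs_i=0,\,\zs\ne\mathbf 0}\R^{d_\zs}$, while those with $\zs_i=1$ (writeable as $\ze_i+\zs'$ with $\zs'_i=0$) supply additional data coupled parametrically to $x^{\ze_i}$. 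Under this repackaging the subgroups $G^j\cap G^i$ identify with the distinguished normal subgroups on the reduced side, and the inductive hypothesis applies. The delicate step is making this identification precise -- matching the polynomial transformation law of $G^i$ with that of the automorphism group of the reduced $(k-1)$-tuple structure, and checking that the restrictions intertwine correctly.
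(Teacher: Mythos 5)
The paper offers no proof of this proposition at all (it is announced as a trivial observation), so there is nothing to compare line by line; judged on its own merits, your argument is sound for the first two conditions but has a genuine gap in the third. Your verification that each $G^i$ is closed and normal (as the kernel of the linear representation $\zr_i$ on $\R^{d_{\ze_i}}$, using that $\ze_i$ admits no nontrivial decomposition in $\{0,1\}^k_*$) is correct, and so is the generation argument via the ``atomic'' automorphisms $\bar A_i$ and the correction $h\in\bigcap_j G^j$. The problem is the inductive step for the subsystems $\wG_i$. The group $G^i$ is \emph{not} canonically isomorphic to the automorphism group of a $(k-1)$-tuple vector space: already for $k=2$ the group $G=G^1$ of Example \ref{ex2} is parametrized by $(\za',\zb,\zs)$ and has a nontrivial unipotent part coming from $\zb$, so it is not any $GL(n)$, and for $k\ge 3$ the analogous mismatch persists (forgetting the $i$-th grading turns $\R^{d_{\ze_i}}$ into base coordinates, so what you get is at best a subgroup of the automorphisms of a trivial $(k-1)$-tuple vector \emph{bundle} over $\R^{d_{\ze_i}}$ singled out by extra polynomial-degree constraints, not the automorphism group of a $(k-1)$-tuple vector space). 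Consequently the inductive hypothesis, which is Proposition \ref{p1} for $k-1$, cannot be applied to $\wG_i$, and the step you yourself flag as ``delicate'' does not close.

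The repair is easier than the identification you attempt, and it is surely what makes the statement ``trivial'': unwinding the recursive definition, all one must check is that for every subset $S\subset\{1,\dots,k\}$ with at least two indices outside $S$, the subgroups $G^j\cap G^S$ ($j\notin S$, $G^S:=\bigcap_{i\in S}G^i$) are closed and normal in $G^S$ and generate it. Closedness and normality are immediate ($G^j\cap G^S=\ker(\zr_j|_{G^S})$, and $G^j$ is normal in all of $\G$). For generation, run your own atomic-factor argument \emph{inside} $G^S$: for $g\in G^S$ the atomic automorphisms $\bar A_j$, $j\notin S$, already lie in $G^S$ (they are the identity on every $\R^{d_{\ze_i}}$ with $i\in S$) and in $G^l\cap G^S$ for each $l\ne j$, while the correction $h=g\,(\prod_{j\notin S}\bar A_j)^{-1}$ lies in $\bigcap_{l=1}^k G^l$. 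So the single decomposition you constructed for $S=\emptyset$ works uniformly at every level of the recursion, and no reinterpretation of $G^i$ as an automorphism group of a smaller structure is needed.
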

\end{example}

\section{Associated bundles and principal connections}
\subsection{Associated double bundles}
Let now $\widehat{\G}=(\G;G,G')$ be a double principal group, $\Pe$ be a double principal bundle with the structure group $\wG$:
\be\label{dgr2a}\widehat{\Pe}=\quad\begin{gathered}{\begin{xy}
(0,20)*+{\Pe}="a"; (20,20)*+{P'}="b";%
(0,0)*+{P}="c"; (20,0)*+{P_0}="d";%
{\ar "a";"b"}?*!/_2mm/{\pi'};
{\ar "a";"c"} ?*!/^2mm/{\pi} ;
{\ar "b";"d"}?*!/_3mm/{{[\pi]}} ;  %
{\ar "c";"d"}?*!/_3mm/{[\pi']};
\end{xy}}\end{gathered}\,,
\ee
and
\be\label{dgr3}\widehat{\M}=\quad\begin{gathered}{\begin{xy}
(0,20)*+{\M}="a"; (20,20)*+{M'}="b";%
(0,0)*+{M}="c"; (20,0)*+{\{ *\}}="d";%
{\ar "a";"b"}?*!/_2mm/{\zr'};
{\ar "a";"c"} ?*!/^2mm/{\zr} ;
{\ar "b";"d"}?*!/_3mm/{{[\zr]}} ;  %
{\ar "c";"d"}?*!/_3mm/{[\zr']};
\end{xy}}\end{gathered}\,,
\ee
be a $\wG$-fibered space. Then, it is easy to see that the associated bundle
$\Pe\ti_{\wG}\M$ is canonically a $\wG$-fibered manifold, namely

\be\label{dgr4}\widehat{\Pe}\ti_\G\widehat{\M}=\quad\begin{gathered}{\xymatrix{
\Pe\ti_\G\M\ar[rr]^{\Pi'}\ar[dd]_{\Pi}&&{P\ti_{G'}M}\ar[dd]^{[\Pi]}\\ &&\\
P'\ti_GM'\ar[rr]^{[\Pi']}&&P_0}}\end{gathered}
\,,
\ee
where
$$\Pe\ti_\G\M=\left\{[p,u]\,|\,\ [p,u]=\{(p\bar g,\bar g^{-1}u)\,|\,\ \bar p\in\Pe\,, u\in\M\,, g\in\G\}\right\}\,,
$$
and $\Pi'([p,u]=[\pi'(p),\zr'(u)]$, \ $\Pi([p,u])=[\pi(p),\zr(u)]$. Note that we can make this construction starting from a $\G_1$-fibered space and a faithful morphism $\zt:\G\to\G_1$ of double principal bundles to produce the $\G$-action on $\M$ and the associated bundle $\Pe\ti_\G\M=\Pe_\zt\M$.
\begin{example}
If $\M$ is a $\G$-fibered space, fibered over $M$ and $M'$ and  $\Pe$ is a $\G$-principal bundle, then the associated bundle $\Pe\ti_\G\M$ is a $\wG$-fibered manifold which is a locally trivial fibration over $P_0$, locally diffeomorphic to $U\ti\M$ with the same transition functions as the $\G$-principal bundle $\Pe$ by maps $\zs:(U\cap U')\to\G$. Indeed, we apply the classical result on the associated bundle construction to the $\G$-principal bundle $\Pe$. The added value is only the double fibration (\ref{dgr4}) on $\Pe\ti_\G\M$.
\end{example}
\subsection{Frame bundle of a double vector bundle}
An inverse construction is the following.
\begin{example} [Frame bundle construction]
Let
\be\label{dgr2b}\widehat{{\V}}=\quad\begin{gathered}{\begin{xy}
(0,20)*+{\V}="a"; (20,20)*+{V'}="b";%
(0,0)*+{V}="c"; (20,0)*+{V_0}="d";%
{\ar "a";"b"}?*!/_2mm/{\zr'};
{\ar "a";"c"} ?*!/^2mm/{\zr} ;
{\ar "b";"d"}?*!/_3mm/{{[\zr]}} ;  %
{\ar "c";"d"}?*!/_3mm/{[\zr']};
\end{xy}}\end{gathered}\,,
\ee
be a double vector bundle modelled on the double vector space $\R^{\ol{d}}_{vect}$ and let
$\G=\Aut(\R^{\ol{d}}_{vect})$ which is canonically a double principal group (see Example \ref{ex2}). Then, for any $x\in V_0$ the space $\V_x=(\zr'\circ[\zr])^{-1}(\{ x\})$ is a double vector space of dimension $\ol{d}$ and the space $\Iso(\V_x,\R^{\ol{d}}_{vect})$ of isomorphisms of double vector spaces is is a $\G$-principal space with the obvious (right) action $\psi \bar g=\psi\circ \bar g$. The corresponding $\G$-bundle $\Pe=\Iso(\V,\R^{\ol{d}}_{vect})$ over $V_0$ with fibers $\Iso(\V_x,\R^{\ol{d}}_{vect})$ is canonically a double principal bundle with the structure group $\G$. We call it the \emph{frame bundle of the double vector bundle $\V$}. Like in the classical case, one can show that the $\V$ is canonically isomorphic with the associated bundle $\Pe\ti_\G\R^{\ol{d}}_{vect}$. This construction can be generalized to any case when the double fibration (\ref{dgr2b}) is locally trivial modelled on a fibre with a geometric structure admitting a Lie group as the automorphism group.
\end{example}
The mutually converse constructions, that of an associated bundle and the frame bundle, lead to the following `double analog' of a classical result.
\begin{theorem}\label{t6}
Given a double vector space $\R^{\ol{d}}_{vect}$, we have the double principal group $\G=\Aut(\R^{\ol{d}}_{vect})$of its automorphisms. Then the associated bundle construction gives
rise to an equivalence from the category of principal $\G$-bundles on $\Pe_0$ to the category
of double vector bundles of dimension $\ol{d}$ on $P_0$.
\end{theorem}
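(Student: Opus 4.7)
The plan is to mimic the classical equivalence between $GL_n$-principal bundles and rank-$n$ vector bundles, but in the double setting. I will define two functors and construct natural isomorphisms between the compositions and the identity functors.

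First, let $F$ be the associated bundle functor sending a $\G$-principal bundle $\Pe \to P_0$ to the double vector bundle $F(\Pe)=\Pe\times_\G\R^{\ol d}_{vect}$ over $P_0$, with the four-term fibration described in (\ref{dgr4}). On morphisms, a $\G$-equivariant map $\Phi:\Pe_1\to\Pe_2$ over $\phi:P_{0,1}\to P_{0,2}$ descends to a map $[p,v]\mapsto[\Phi(p),v]$ which is fibrewise a double vector space isomorphism. Let $G$ be the frame functor sending a double vector bundle $\V$ of dimension $\ol d$ to its frame bundle $G(\V)=\Iso(\V,\R^{\ol d}_{vect})\to V_0$, with the right $\G$-action $\psi\cdot\bar g=\psi\circ\bar g$; this is a $\G$-principal bundle by the preceding example. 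On morphisms, a double vector bundle morphism $\Psi:\V_1\to\V_2$ over $\psi:V_{0,1}\to V_{0,2}$ which is a fibrewise isomorphism gives $G(\Psi):\xi\mapsto\xi\circ\Psi_x^{-1}$.

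Next, I construct the natural isomorphisms. For $F\circ G\simeq\id$ on double vector bundles, given $\V$ with frame bundle $\Pe=\Iso(\V,\R^{\ol d}_{vect})$, define
\[
\Theta_\V:\Pe\times_\G \R^{\ol d}_{vect}\lra \V\,,\qquad [\xi,v]\longmapsto \xi^{-1}(v)\,.
\]
This is well defined because $\xi^{-1}(v)=(\xi\circ\bar g)^{-1}(\bar g^{-1}v)$; it is fibrewise a double vector space isomorphism since each $\xi$ is; naturality in $\V$ is immediate. For $G\circ F\simeq\id$ on $\G$-principal bundles, given $\Pe$ with associated bundle $\V=\Pe\times_\G\R^{\ol d}_{vect}$, define
\[
\Xi_\Pe:\Pe\lra\Iso(\V,\R^{\ol d}_{vect})\,,\qquad p\longmapsto \bigl(v\mapsto[p,v]\bigr)^{-1}\,,
\]
i.e.\ send $p$ to the inverse of the trivialisation of the fibre of $\V$ over $\pi(p)$ determined by $p$. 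This is $\G$-equivariant because changing $p$ to $p\bar g$ precomposes the trivialisation with $\bar g$, so its inverse is postcomposed with $\bar g^{-1}$, matching the right action on $\Iso(\V,\R^{\ol d}_{vect})$.

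The main points requiring care — and what I expect to be the real work — are: (a) verifying that the construction $G(\V)$ really gives a $\G$-principal bundle in our strong double-principal sense, i.e.\ that the local trivialisations of $\V$ as a double vector bundle produce transition cocycles valued in $\G=\Aut(\R^{\ol d}_{vect})$ (not merely in some larger group of fibrewise diffeomorphisms); this uses that the local model of a double vector bundle is precisely $U\times\R^{\ol d}_{vect}$ with transition functions of the polynomial form displayed in Example \ref{ex2}, hence the frame bundle is locally trivial with structure group $\G$; (b) checking that $\Theta_\V$ and $\Xi_\Pe$ are smooth and that $\Theta_\V$ respects the full double vector bundle structure, which reduces to the fact that each $\xi$ is, by definition, an isomorphism of double vector spaces and hence intertwines both the $\zr$- and $\zr'$-fibrations together with their vector bundle structures. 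Once smoothness, equivariance, and fibrewise isomorphism are in place for $\Theta$ and $\Xi$, their inverses are given by the opposite constructions, so they are isomorphisms of categories on the nose — proving the equivalence.
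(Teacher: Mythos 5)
Your proposal is correct and takes essentially the same route as the paper: the paper establishes Theorem \ref{t6} only by invoking the two ``mutually converse constructions'' of the preceding examples --- the associated bundle $\Pe\ti_\G\R^{\ol{d}}_{vect}$ and the frame bundle $\Iso(\V,\R^{\ol{d}}_{vect})$ --- and your functors $F$ and $G$ together with the natural isomorphisms $\Theta_\V$ and $\Xi_\Pe$ are precisely the details that this sketch leaves implicit (including the point that local double-vector-bundle trivialisations give $\G$-valued cocycles). The only thing to tidy is the direction convention in $\Iso(\V_x,\R^{\ol{d}}_{vect})$ versus the right action $\psi\bar g=\psi\circ\bar g$, an ambiguity already present in the paper's own frame-bundle example.
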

Actually, the above theorem and constructions have full analogues in the $k$-tuple case.
\subsection{Higher double graded bundles}
All the above constructions for double vector bundles can be generalized to
higher \emph{graded bundles}.

Consider now a smooth action $h:\R\times F\to F$ of the monoid $(\R,\cdot)$ on a manifold $F$ and assume that $h_0(F)=0^F$ for some element $0^F\in F$. Such an action we will call a \textit{ homogeneity structure}. The set $F$ with a homogeneity structure will be called a \textit{graded space}. The reason for the name is the following theorem

\begin{theorem}[Grabowski-Rotkiewicz \cite{Grabowski:2012}]\label{theorem:1}
Any graded space $(F,h)$ is diffeomorphically equivalent to a \emph{dilation structure}, i.e. to a certain $(\R^\dd,h^\dd)$, where
$\dd=(d_1,\dots,d_k)$, with positive integers $d_i$,   and $\R^\dd=\R^{d_1}\times\cdots\times\R^{d_k}$
is equipped with the dilation action $h^\dd$ of multiplicative reals given by
$$h^\dd_t(y_1,\dots,y_k)=(t\cdot y_1,\dots, t^k\cdot y_k)\,,\quad y_i\in\R^{d_i}\,.$$
In other words, $F$ can be equipped with a system of (global) coordinates $(y_i^j)$, $i=1\dots, k$, $j=1,\dots, d_i$, such that $y_i^j$ is  {homogeneous of degree $i$} with respect to the homogeneity structure $h$:
$$y_i^j\circ h_t=t^i\cdot y_i^j\,.$$
Of course, in these coordinates $0^F=(0,\dots, 0)$.
\end{theorem}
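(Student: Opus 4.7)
The plan is to extract homogeneous global coordinates directly from the action $h$ by taking Taylor coefficients in $t$ at $0$, and then verify that the resulting map to $(\R^\dd,h^\dd)$ is a diffeomorphism by exploiting equivariance. For $f\in C^\infty(F)$ and $i\in\N$, set
$$
f_{(i)}(p):=\frac{1}{i!}\,\frac{\partial^i}{\partial t^i}\bigg|_{t=0}f(h_t(p)).
$$
Smoothness of $h\colon\R\times F\to F$ gives $f_{(i)}\in C^\infty(F)$, and the monoid identity $h_s\circ h_t=h_{st}$ yields $f_{(i)}\circ h_s=s^i f_{(i)}$, so $f_{(i)}$ is homogeneous of weight $i$. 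Because $h_0(F)=\{0^F\}$, there are no nonzero smooth functions of negative weight and every weight-zero function is constant.

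Next I would linearise $h$ at the fixed point $0^F$: the tangent maps $T_{0^F}h_t$ form a smooth monoid action of $(\R,\cdot)$ on the finite-dimensional space $T_{0^F}F$ by linear endomorphisms, with $T_{0^F}h_0=0$. Writing $T_{0^F}h_{e^s}=e^{sA}$ for $s\in\R$, convergence $e^{sA}\to 0$ as $s\to-\infty$ forces all eigenvalues of $A$ to have positive real part, while the requirement that each scalar $t\mapsto t^\lambda$ be smooth at $t=0$ forces the eigenvalues into $\N_{>0}$. Thus there is a grading $T_{0^F}F=\bigoplus_{i=1}^{k}V_i$ with $\dim V_i=d_i$ on which $T_{0^F}h_t$ acts by $t^i$.

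For each $(i,j)$ with $1\le j\le d_i$ choose $\tilde y_i^j\in C^\infty(F)$ whose differentials at $0^F$ annihilate $V_l$ for $l\ne i$ and collectively form a basis of $V_i^*$; such functions exist by taking linear coordinates on $T_{0^F}F$ in a chart around $0^F$ and extending with a cut-off. Set $y_i^j:=(\tilde y_i^j)_{(i)}$. Using $T_{0^F}h_t|_{V_l}=t^l\,\id_{V_l}$ and the chain rule one checks $dy_i^j(0^F)=d\tilde y_i^j(0^F)$, so the $\{dy_i^j(0^F)\}$ form a basis of $T_{0^F}^*F$. The assembled map
$$
\Phi:=(y_i^j)\colon F\lraa\R^\dd,\qquad\dd=(d_1,\dots,d_k),
$$
satisfies $\Phi\circ h_t=h^\dd_t\circ\Phi$ by construction and, by the inverse function theorem, is a local diffeomorphism from some open neighbourhood $U$ of $0^F$ onto an open set $W=\Phi(U)\subset\R^\dd$.

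The remaining step, and the one I expect to require the most care, is to promote $\Phi$ to a global diffeomorphism; this is done purely by equivariance. Given $p\in F$, continuity of $h$ together with $h_0(p)=0^F$ ensures $h_t(p)\in U$ for all sufficiently small $t>0$, so $\Phi(p)=h^\dd_{1/t}(\Phi(h_t(p)))$ expresses $\Phi$ globally in terms of its restriction to $U$. For injectivity, $\Phi(p)=\Phi(q)$ gives $\Phi(h_t(p))=\Phi(h_t(q))$ in $W$ for small $t>0$, whence $h_t(p)=h_t(q)$, and since $h_t$ is a diffeomorphism for $t\ne 0$ with inverse $h_{1/t}$, $p=q$. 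For surjectivity, any $y\in\R^\dd$ has $h^\dd_t(y)\in W$ for small $t>0$, so $h^\dd_t(y)=\Phi(q)$ for some $q\in U$ and $y=\Phi(h_{1/t}(q))$. Smoothness of $\Phi^{-1}$ follows from the same equivariant formula applied locally, giving the required diffeomorphism $(F,h)\cong(\R^\dd,h^\dd)$ together with the homogeneous coordinates $y_i^j$.
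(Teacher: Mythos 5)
Your argument is correct, and I can confirm it goes through; note that the paper you were given does not actually prove this statement --- it quotes it from Grabowski--Rotkiewicz \cite{Grabowski:2012} --- so the comparison is with the proof there. Both arguments share the same engine: the Taylor coefficients $f_{(i)}(p)=\frac{1}{i!}\partial_t^i|_{t=0}f(h_t(p))$ as the source of homogeneous functions, and equivariance $\Phi\circ h_t=h^\dd_t\circ\Phi$ together with $h_t(p)\to 0^F$ as $t\to 0^+$ to promote a local statement at $0^F$ to a global diffeomorphism. Where you differ is in how you certify that finitely many such functions form a coordinate system: Grabowski--Rotkiewicz argue via the $k$-jet map $p\mapsto j^k_{t=0}h_t(p)$ into $\sT^kF|_{M}$ and an inductive analysis of the Taylor expansion of $h$ itself (which is what lets them treat the general graded \emph{bundle} case, where $h_0(F)$ is a submanifold rather than a point), whereas you linearise at the unique fixed point, diagonalise the monoid representation $t\mapsto T_{0^F}h_t$, and read off both the multidegree $\dd$ and the nondegeneracy of $d\Phi(0^F)$ from the eigenspace decomposition. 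Your route is shorter and quite clean for the graded \emph{space} case, which is all the stated theorem requires. One step you should spell out: from $T_{0^F}h_{e^s}=e^{sA}$ and smoothness of $t\mapsto T_{0^F}h_t$ at $t=0$ you need not only that the eigenvalues of $A$ lie in $\N_{>0}$ but also that $A$ is semisimple (a nontrivial Jordan block would produce entries $t^i\log t$, which are not $C^\infty$ at $0$) and that the action of $t<0$ is $(-1)^i$ on $V_i$; both follow from the same smoothness requirement, so this is a presentational gap rather than a mathematical one.
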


It is natural to call a \textit{morphism between graded spaces} $(F_a,h^a)$, $a=1,2$, a smooth map $\Phi:F_1\to F_2$ which intertwines the homogeneity structures:
\begin{equation}\label{gr:2}\Phi\circ h_t^1=h_t^2\circ\Phi
\end{equation}

\begin{theorem}[Grabowski-Rotkiewicz \cite{Grabowski:2012}] Any morphism of graded spaces is polynomial in homogeneous fiber coordinates $y$'s. In particular the group $\Aut(\R^\dd)$ of automorphisms of $\R^\dd$ is a Lie group.
\end{theorem}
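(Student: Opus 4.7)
Using Theorem \ref{theorem:1}, I will reduce the problem to working with the model dilation structures $(\R^\dd, h^\dd)$. Fix source weights $\dd=(d_1,\dots,d_k)$ with global homogeneous coordinates $(y_i^j)$, $1\le i\le k$, $1\le j\le d_i$, where $y_i^j$ has weight $i$. Writing a morphism $\Phi$ in its components $\Phi^j_i$ with respect to the target grading, the intertwining condition $\Phi\circ h^1_t=h^2_t\circ\Phi$ becomes the assertion that each component $\Phi^j_i$ is a smooth function on $\R^\dd$ satisfying $\Phi^j_i\circ h^\dd_t=t^{\,w}\,\Phi^j_i$ for $w=i$ (the weight in the target). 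Thus the whole problem reduces to the following scalar claim: every smooth function $f:\R^\dd\to\R$ with $f\circ h^\dd_t=t^{\,w}f$ for all $t\in\R$ is a polynomial in the $y_i^j$ whose monomials all have weighted degree exactly $w$.

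I would prove this claim by a Taylor-remainder bootstrap. Choose $N$ large, and write $f=P_N+R_N$ where $P_N$ is the Taylor polynomial of $f$ at $0$ of ordinary degree $N$ and $R_N(y)=o(|y|^N)$; moreover $|R_N(y)|\le C(y)\,|y|^{N+1}$ with $C$ continuous. Substituting $h^\dd_t y$ into the identity $f=P_N+R_N$ and using $f\circ h^\dd_t=t^{\,w}f$, the uniqueness of Taylor coefficients forces the only surviving monomials in $P_N$ to be those of weighted degree exactly $w$; hence $P_N$ is itself $h^\dd_t$-homogeneous of weight $w$ and so $R_N$ is as well. Since $|h^\dd_t y|\le|t|\,|y|$ for $|t|\le 1$, we obtain
\[
|t|^{w}|R_N(y)|=|R_N(h^\dd_t y)|\le C(h^\dd_t y)\,|t|^{N+1}|y|^{N+1},
\]
so for $N+1>w$ the factor $|t|^{N+1-w}$ lets $t\to 0$ force $R_N(y)=0$. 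Thus $f=P_N$ is polynomial with the required weight structure. Applying this component-wise to $\Phi$ yields the polynomial description of morphisms, which immediately implies that the composition of graded morphisms is again graded and polynomial of bounded (weighted) degree.

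For the Lie group assertion, I would observe that the above argument bounds the possible monomials in any graded endomorphism of $\R^\dd$ to a fixed finite list, so the space $\End(\R^\dd)$ of graded smooth self-maps is a finite-dimensional real vector space in which composition is a polynomial operation. An element $\Phi\in\End(\R^\dd)$ belongs to $\Aut(\R^\dd)$ if and only if its linearization $D_0\Phi$ — which is block-diagonal with blocks $D_0\Phi\bigr|_{\R^{d_i}}\in\mathrm{End}(\R^{d_i})$ because weights are preserved — is invertible, a condition cut out by the non-vanishing of finitely many determinants. Hence $\Aut(\R^\dd)$ is an open subset of a finite-dimensional vector space, on which composition is polynomial and inversion can be constructed order-by-order in the weighted filtration as a rational (and in fact smooth) operation in the coefficients; this gives the Lie group structure.

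The substantive step, and the one I expect to be the main obstacle, is the polynomiality claim: specifically, controlling the Taylor remainder under weighted (rather than ordinary) scaling, since the factor $|h^\dd_t y|\le|t||y|$ is delicate and one must pick $N$ sufficiently large relative to $w$ to exploit it. Once polynomiality and exact weight of monomials are in hand, the Lie group conclusion follows by standard open-subset reasoning in a finite-dimensional ambient algebra.
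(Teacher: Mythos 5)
Your argument is correct, but note that the paper itself gives no proof of this statement: it is quoted verbatim from Grabowski--Rotkiewicz \cite{Grabowski:2012}, so the only comparison available is with the proof in that reference. There the key scalar fact (a smooth $f$ with $f\circ h^\dd_t=t^{w}f$ is a weighted-homogeneous polynomial) is obtained more directly: since $t\mapsto f(h^\dd_t y)$ is smooth and equals $t^{w}f(y)$, one has $f(y)=\frac{1}{w!}\frac{d^{w}}{dt^{w}}\big|_{t=0}f(h^\dd_ty)$, and the chain rule exhibits the right-hand side as a universal polynomial in the coordinates $y_i^j$ with coefficients built from the partial derivatives of $f$ at $0$; no remainder estimate is needed. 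Your Taylor-remainder bootstrap reaches the same conclusion and is sound: the identification $P_N\circ h^\dd_t=t^{w}P_N$ via uniqueness of Taylor polynomials (legitimate because $h^\dd_t$ is linear for fixed $t$), the resulting homogeneity of $R_N$, and the estimate $|R_N(y)|\le C(h^\dd_ty)\,|t|^{N+1-w}|y|^{N+1}\to 0$ for $N+1>w$ all check out; the price is only the extra bookkeeping with the continuous bound $C$. Your treatment of the Lie group claim (finite-dimensional ambient algebra $\End(\R^\dd)$, block-diagonal linearization by weight, openness of the invertibility locus, and recursive construction of the inverse up the weight filtration) is the standard argument and matches what the cited source does. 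One small point worth making explicit in your reduction step: the equivariant diffeomorphisms of Theorem \ref{theorem:1} are exactly what guarantees that conjugating an abstract morphism of graded spaces yields a morphism of the model dilation structures, so that ``polynomial in homogeneous coordinates'' is well posed.
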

Note that automorphisms of $(\R^\dd,h^\dd)$ need not to be linear, so the category of graded spaces is different from that of vector spaces.   For instance, if $(y,z)\in\R^2$ are coordinates of degrees $1,2$, respectively, then the map
$$\R^2\ni(y,z)\mapsto (y,z+y^2)\in\R^2$$
is an automorphism of the homogeneity structure, but is nonlinear.

A straightforward generalisation is the concept of a \textit{graded bundle} $\tau:F\to M$
of rank $\dd$, with a local trivialization by $U\rightarrow\R^\dd$, and with the difference that the transition functions of local trivialisations:
$$U\cap V\times\R^\dd\ni(x,y)\longmapsto(x,A(x,y))\in U\cap V\times\R^\dd\,,$$
respect the weights of coordinates $(y^1,\dots,y^{|\dd|})$ in the fibres, i.e. $A(x,\cdot)$ are automorphisms of the graded space $(\R^\dd,h^\dd)$. In other words, a graded bundle of rank $\dd$ is a locally trivial fibration with fibers modelled on the graded space $\R^\dd$.
As these polynomials need not to be linear, graded bundles do not have, in general,
vector space structure in fibers. If all $w_i\le r$, we say that the graded bundle is \emph{of degree} $r$.

\begin{example} Consider the second-order tangent bundle $\sT^2M$, i.e. the bundle of second jets of smooth maps $(\R,0)\to M$. Writing Taylor expansions of curves in local coordinates $(x^A)$ on $M$:
\begin{equation*}x^A(t)=x^A(0)+\dot x^A(0)t+\ddot x^A(0)\frac{t^2}{2}+o(t^2)\,,\end{equation*}
we get local coordinates $(x^A,\dot x^B,\ddot x^C)$ on $\sT^2M$, which transform
\begin{eqnarray*}
x'^A&=&x'^A(x)\,,\\
\dot x'^A&=&\frac{\partial x'^A}{\partial x^B}(x)\,\dot x^B\,,\\
\ddot x'^A&=&\frac{\partial x'^A}{\partial x^B}(x)\,\ddot x^B+\frac{\partial^2 x'^A}{\partial x^B\partial x^C}(x)\,\dot x^B\dot x^C\,.
\end{eqnarray*}
This shows that associating with $(x^A,\dot x^B,\ddot x^C)$ the weights $0,1,2$, respectively, will give us a graded bundle structure of degree $2$ on $\sT^2M$. Note that, due to the quadratic terms above, this is not a vector bundle.
All this can be generalised to higher tangent bundles $\sT^kM$.
\end{example}

One can pick an atlas of the graded bundle $F$ consisting of charts for which the degrees of homogeneous local coordinates $(x^{A}, y_{w}^{a})$ are $\wu(x^{A}) =0$ and  $\wu(y_{w}^{a}) = w$, \ $1\leq w \leq k$, where $k$ is the degree of the graded bundle. The local changes of coordinates  are of the form
\begin{eqnarray}\label{eqn:translaws}
x'^{A} &=& x'^{A}(x),\\
\nonumber y'^{a}_{w} &=& y^{b}_{w} T_{b}^{\:\: a}(x) + \sum_{\stackrel{1<n  }{w_{1} + \cdots + w_{n} = w}} \frac{1}{n!}y^{b_{1}}_{w_{1}} \cdots y^{b_{n}}_{w_{n}}T_{b_{n} \cdots b_{1}}^{\:\:\: \:\:\:\:\:a}(x),
\end{eqnarray}
where $T_{b}^{\:\: a}$ are invertible and $T_{b_{n} \cdots b_{1}}^{\:\:\: \:\:\:\:\:a}$ are symmetric in indices $b_1,\dots,b_n$.

Note that the homogeneity structure in the typical fiber of a graded bundle $F$, i.e. the action $h:\R\times\R^\dd\to\R^\dd$, is preserved under the transition functions, that defines a globally defined homogeneity structure $h:\R\times F\to F$. In local homogeneous coordinates it reads
\begin{equation*}h_t(x^A,y_w^a)=(x^A,t^{w}y^a_w)\,.\end{equation*}
We call a function $f:F\to \R$  \textit{homogeneous of degree (weight) $w$} if
\begin{equation*}f\circ h_t=t^w\cdot f\,.\end{equation*}
The whole information about the degree of homogeneity is contained in the  \textit{weight vector field} (for vector bundles called the \textit{Euler vector field})
\begin{equation*}\nabla_F=\sum_swy^a_w\partial_{y^a_w}\,,\end{equation*}
so $f:F\to \R$ is homogeneous of degree $w$ if and only if $\nabla_F(f)=w\cdot f$.
Clearly, the fiber bundle morphism $\Phi$ is a smooth map which relates the weight vector fields $\nabla_{F^1}$ and $\nabla_{F^2}$.

The fundamental fact is that graded bundles and homogeneity structures are actually equivalent concepts.

\begin{theorem}[Grabowski-Rotkiewicz \cite{Grabowski:2012}]
For any homogeneity structure $h$ on a manifold $F$, there is a smooth submanifold $M$ of $F$, a non-negative integer $k\in\mathbb N$, and an $\R$-equivariant map
$\Phi_h^k:F\to \sT^kF_{|M}$
which identifies $F$ with a graded submanifold of the graded bundle $\sT^kF$. In particular, there is an atlas on $F$ consisting of local homogeneous coordinates.
\end{theorem}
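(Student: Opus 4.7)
The plan is to construct $\Phi_h^k$ as the $k$-th order jet of the orbit map through each point, and then verify the required properties via a local weight analysis of the linearisation of the action at its fixed points. First, since $h_0\circ h_0=h_{0\cdot 0}=h_0$, the map $h_0:F\to F$ is a smooth idempotent, and the image of a smooth idempotent is automatically a closed embedded submanifold, so one sets $M:=h_0(F)$ and notes that $h_0|_M=\id_M$. For any $p\in F$, the orbit map $c_p:\R\to F$, $c_p(t):=h_t(p)$, is smooth with $c_p(0)=h_0(p)\in M$, so the natural candidate is
$$\Phi_h^k(p):=j^k_0 c_p \,\in\, \sT^k_{h_0(p)}F \subset \sT^kF|_M.$$
Equivariance is then immediate from the monoid law $h_s\circ h_t=h_{st}$: dilating the parameter of a curve by $s$ on the $\sT^k$-side, namely $j^k_0 c\mapsto j^k_0(t\mapsto c(st))$, corresponds precisely to precomposing with $p\mapsto h_s(p)$ on the $F$-side.

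Next I would analyse the action near a fixed point $p\in M$. The differentials $\{d(h_t)_p\}_{t\in\R}$ form a smooth representation of the multiplicative monoid $(\R,\cdot)$ on $T_pF$, with $d(h_0)_p$ an idempotent of image $T_pM$. A smooth character $(\R,\cdot)\to(\R,\cdot)$ must be of the form $t\mapsto t^w$ for a non-negative integer $w$, and the standard joint-diagonalisation argument applied to this one-parameter family yields a weight decomposition
$$T_pF=\bigoplus_{w\ge 0}V_p^{(w)},\qquad V_p^{(0)}=T_pM.$$
One then takes $k$ to be the largest weight occurring; local boundedness of $k$ follows from smoothness of $h$ together with the Grabowski--Rotkiewicz structure theorem for graded spaces applied fibrewise to the normal directions of $M$ in $F$.

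The core of the proof is to promote this pointwise weight data into a local normal form. Near $p\in M$ one should construct homogeneous coordinates $(x^A,y^a_w)$ in which the action reads $h_t(x^A,y_w^a)=(x^A,t^w y_w^a)$, identifying a neighbourhood of $p$ in $F$ with an open subset of a graded bundle of rank determined by the dimensions of the $V^{(w)}$. In such coordinates $c_p(t)$ is polynomial in $t$ of degree $\le k$, so $j^k_0 c_p$ determines $c_p$ completely and in particular recovers $p=c_p(1)$. Hence $\Phi_h^k$ is injective, an immersion, and its image is exactly the set of jets of polynomial curves of weight $\le k$, i.e.\ a graded submanifold of $\sT^kF|_M$. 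Pulling back the canonical homogeneous coordinates of $\sT^kF$ along $\Phi_h^k$ produces the required atlas of local homogeneous coordinates on $F$.

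The main obstacle is precisely the assembly of the pointwise weight decomposition into smooth local coordinates on a neighbourhood of $p$ in $F$, not merely on $T_pF$. This is a tubular-neighbourhood style argument using the flow of the Euler-type vector field $\tfrac{\rmd}{\rmd t}\big|_{t=1}h_t$ generating the action, combined with a fibrewise application of the preceding classification of graded spaces to the normal directions. Once this local normal form is established, the global statements — that $\Phi_h^k$ is an equivariant embedding onto a graded submanifold of $\sT^kF|_M$ — follow by covering $F$ with such charts and checking their compatibility through the equivariance from the first paragraph.
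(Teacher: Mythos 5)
The paper itself gives no proof of this statement; it is imported verbatim from Grabowski--Rotkiewicz \cite{Grabowski:2012}, so your attempt has to be measured against the proof there. Your skeleton agrees with it: taking $M=h_0(F)$ as the image of the smooth idempotent $h_0$, and defining $\Phi_h^k(p)=j^k_0(t\mapsto h_t(p))$ with equivariance read off from $h_s\circ h_t=h_{st}$, is exactly the construction used in the original. The gap is in the step you yourself flag as the core: promoting the pointwise weight decomposition of $T_pF$ to smooth homogeneous coordinates on a neighbourhood of $p$ in $F$. Neither justification you offer closes it. Invoking ``the Grabowski--Rotkiewicz structure theorem for graded spaces applied fibrewise to the normal directions'' is circular: that theorem (Theorem~\ref{theorem:1} above) is precisely the special case $M=\{0^F\}$ of the statement being proved, and applying it ``fibrewise'' presupposes an $h$-invariant fibration of a neighbourhood of $M$ transverse to $M$, which is part of what has to be constructed. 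The tubular-neighbourhood argument via the Euler-type field $\nabla=\frac{\rmd}{\rmd t}\big|_{t=1}h_t$ is likewise insufficient: $\nabla$ vanishes along $M$, its flow only recovers $h_t$ for $t>0$, and producing smooth normalising coordinates for a vector field along its zero set is a Sternberg-type linearisation problem that fails in general; the entire content of the theorem is that the presence of the $t=0$ limit forces it to succeed here, so this cannot be waved through.

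The actual proof closes the gap function-theoretically rather than geometrically: for any smooth $f$ on $F$, the Taylor coefficients $f_w(p)=\frac{1}{w!}\,\partial_t^w\big|_{t=0}f(h_t(p))$ are smooth, globally defined, and homogeneous of degree $w$ (differentiating $f(h_{st}(p))$ in two ways gives $f_w\circ h_s=s^w f_w$); one then shows, using the Hadamard division lemma along orbits and the fact that $h_0$ retracts a neighbourhood onto $M$, that for $k$ large enough the differentials of such homogeneous functions of degree $\le k$ span the cotangent spaces along $M$, and takes them directly as the homogeneous coordinates. This manufactures the local normal form without any linearisation argument and simultaneously yields the local bound on $k$. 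If you want to keep your geometric route you must supply a genuine proof of the local normal form, and you should also repair the circular justification of the local boundedness of $k$ (which can be done elementarily, e.g.\ from the fact that $\det d(h_t)_p=t^{N(p)}$ with $N$ continuous and integer-valued, hence locally constant). As a smaller point, your ``standard joint-diagonalisation'' of $\{d(h_t)_p\}$ also needs the smoothness at $t=0$ to exclude nilpotent (log-type) contributions, not just to pin down the characters $t\mapsto t^w$.
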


The theory of vector bundles is a part of the theory of graded bundles thanks to the following.
\begin{theorem}[Grabowski-Rotkiewicz \cite{Grabowski:2012}] In the above terminology, {vector bundles are just graded bundles of degree $1$}.  The corresponding homogeneity structure is determined by the $\R$-action by homotheties. The corresponding weight vector field is the Euler vector field.
\end{theorem}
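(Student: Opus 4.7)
The plan is to argue directly from the local form of the transition laws \eqref{eqn:translaws}, specialised to degree $k=1$, and then to check that the two categorical descriptions match both on objects and on morphisms.

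First I would examine the transition formula for a graded bundle of degree $1$. Since the only available positive weight is $w=1$, the condition $w_1+\cdots+w_n=1$ with $n>1$ and $w_i\ge 1$ is unsatisfiable, so the polynomial sum in \eqref{eqn:translaws} collapses and the fibre transitions reduce to $y'^a=y^b\,T_b^{\,a}(x)$ with $T(x)\in GL$. This cocycle is precisely the datum of a vector bundle over the base $M$ (with base coordinates $x^A$, $\wu(x^A)=0$). In these trivialisations the homogeneity action takes the form $h_t(x,y)=(x,ty)$, which is globally well defined because linear transitions commute with scalar multiplication, and the weight vector field reads $\nabla_F=\sum_a y^a\partial_{y^a}$, i.e. the classical Euler vector field.

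Conversely, starting from a vector bundle $F\to M$, I would define $h:\R\times F\to F$ by fibrewise scalar multiplication, $h_t(v)=t\cdot v$. This is smooth, satisfies $h_t\circ h_s=h_{ts}$, $h_1=\id$, and $h_0(F)=0^F$ is the zero section, hence defines a homogeneity structure. In any linear local trivialisation the fibre coordinates are automatically of weight $1$, so transition functions are weight-preserving and $F$ is a graded bundle of degree $1$.

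It remains to verify the equivalence on morphisms, which is where the only subtle point sits. A morphism of homogeneity structures $\Phi:F_1\to F_2$ satisfies $\Phi\circ h_t^1=h_t^2\circ\Phi$ for every $t\in\R$. In local homogeneous coordinates this reads $\Phi(x,ty)=(\phi(x),t\,\Psi(x,y))$; evaluating at $t=0$ shows $\Phi$ maps zero section to zero section, and smoothness of $\Phi$ at $y=0$ combined with the identity $\Psi(x,ty)=t\,\Psi(x,y)$ forces $\Psi(x,\cdot)$ to be \emph{linear} (a smooth, globally $\R$-homogeneous of degree $1$ map on a vector space is linear, by Taylor expansion at the origin). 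Hence $\Phi$ is a vector bundle morphism in the classical sense, and conversely every vector bundle morphism obviously intertwines the scalar actions. The main (and essentially only) obstacle is this last linearity argument; once it is in place, the two categories are identified, proving the theorem.
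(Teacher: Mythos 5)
Your argument is correct. Note first that the paper offers no proof of this statement at all: it is quoted as a result of Grabowski--Rotkiewicz \cite{Grabowski:2012}, so there is no internal proof to compare against. Your reduction of the transition laws \eqref{eqn:translaws} at degree $k=1$ (the sum over $n>1$ with $w_1+\cdots+w_n=1$, $w_i\ge 1$ being empty, leaving $y'^a=y^b\,T_b^{\;a}(x)$) is exactly the right observation for the object-level identification, and you correctly isolate the one genuinely nontrivial step on the morphism level: that a smooth map $\Psi$ with $\Psi(x,ty)=t\,\Psi(x,y)$ for \emph{all} $t\in\R$ is fibrewise linear, which follows from $\Psi(x,0)=0$ and comparing $\Psi(x,y)=\lim_{t\to 0}\Psi(x,ty)/t$ with the differential at the origin. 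One remark on scope: your proof uses the local-trivialization definition of a graded bundle adopted in this paper, which is legitimate here; the stronger content of the cited Grabowski--Rotkiewicz results is that an abstract homogeneity structure (a smooth action of the monoid $(\R,\cdot)$, with no local triviality assumed) already forces this local form --- that harder equivalence is the subject of the separate theorems quoted just above and is not something your argument needs to, or does, reprove.
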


Since  {morphisms} of two homogeneity structures are defined as smooth maps $\Phi:F_1\to F_2$ intertwining the $\R$-actions: $\Phi\circ h^1_t=h^2_t\circ\Phi$, this describes also morphism of graded bundles. Consequently, a  \textit{graded subbundle} of a graded bundle $F$ is a smooth submanifold $S$ of $F$ which is invariant with respect to homotheties, $h_t(S)\subset S$ for all $t\in\R$.

\begin{definition} A  \textit{double graded bundle} is a manifold equipped with two homogeneity structures $h^1,h^2$ which are  \textit{compatible} in the sense that
$$h^1_t\circ h^2_s=h^2_s\circ h^1_t\quad \text {for all\ } s,t\in\R\,.$$
\end{definition}

The above condition can also be formulated as commutation of the corresponding weight vector fields, $[\nabla^1,\nabla^2]=0$. For vector bundles this is equivalent to the concept of a double vector bundle in the sense of Pradines.

\begin{theorem}[Grabowski-Rotkiewicz \cite{Grabowski:2006}]
The concept of a double vector bundle, understood as a particular double graded bundle in the above sense, coincides with that of {Pradines}.
\end{theorem}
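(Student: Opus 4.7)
The plan is to invoke the previous Grabowski--Rotkiewicz identification of vector bundles with degree-one homogeneity structures, and then to translate Pradines' compatibility axioms into the commutativity of the two resulting $\R$-actions.

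First I would take a Pradines double vector bundle $D$ with side bundle projections $\pi_1:D\to A$ and $\pi_2:D\to B$, and consider the two scalar multiplications $h^1_t,h^2_t:D\to D$ coming from the two vector bundle structures. By the Grabowski--Rotkiewicz characterisation, each $h^i$ is a degree-one homogeneity structure. Pradines' axiom that $\pi_1$ is a vector bundle morphism for the structure carried by $\pi_2$, together with the symmetric condition, translates into the statement that $h^1_t\circ h^2_s = h^2_s\circ h^1_t$ for all $s,t\in\R$. Thus a Pradines DVB yields a double graded bundle in which both weight vector fields $\nabla^1,\nabla^2$ have degree one and commute.

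Conversely, suppose $D$ carries two commuting degree-one homogeneity structures $h^1,h^2$. Each $h^i$ makes $D$ into a vector bundle $\pi_i:D\to M_i$ with $M_i=h^i_0(D)$. The commutativity $h^1_t\circ h^2_s=h^2_s\circ h^1_t$ forces every $h^1_t$ to be an automorphism of the homogeneity structure $h^2$, hence a morphism of the associated vector bundle $\pi_2:D\to M_2$; differentiating in $t$ at $t=0$ then shows that fibrewise addition with respect to $\pi_2$ intertwines the $h^1$-action, and therefore is a vector bundle morphism over the induced addition in $M_2$. The same argument with roles exchanged produces the full set of Pradines compatibility conditions between the two structures.

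The main obstacle I anticipate is the reconstruction of the remaining Pradines data, namely the double base $M_{12}=h^1_0(h^2_0(D))$ and the two projections $M_i\to M_{12}$ that should themselves be vector bundles fitting into the classical diamond diagram. Here the strategy is to observe that $h^1$ restricts, via commutativity, to a homogeneity structure on $M_2=h^2_0(D)$ and symmetrically $h^2$ restricts to one on $M_1$; invoking Grabowski--Rotkiewicz once more at this reduced level produces the side vector bundle projections to $M_{12}$, while the commutation of the four projections is automatic from $h^1_0\circ h^2_0=h^2_0\circ h^1_0$. This packages the double graded bundle data into Pradines' diamond, giving the equivalence of the two definitions.
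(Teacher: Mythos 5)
The paper does not actually prove this statement: it is imported verbatim from Grabowski--Rotkiewicz \cite{Grabowski:2006}, so there is no in-paper argument to compare against. Your sketch follows the same overall strategy as that source -- use the identification of vector bundles with degree-one homogeneity structures, observe that the Pradines axioms force the two scalar multiplications to commute, and for the converse upgrade commutation of the homotheties to the full Pradines compatibility. The architecture is right, and the forward direction is indeed immediate.

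The one place where your sketch misstates the mechanism is the phrase ``differentiating in $t$ at $t=0$ then shows that fibrewise addition with respect to $\pi_2$ intertwines the $h^1$-action, and therefore is a vector bundle morphism.'' Differentiation plays no role here and by itself would never produce additivity (a map can intertwine the infinitesimal generators without being linear). The actual engine is the Grabowski--Rotkiewicz morphism theorem quoted earlier in the paper: a smooth map intertwining the homothety actions of two vector bundles is \emph{automatically} a vector bundle morphism, i.e.\ fibrewise linear. The argument then runs: since $h^1_t$ commutes with $h^2$, each $h^1_t$ is $h^2$-equivariant, hence linear for $+_2$; consequently the addition $+_2:D\times_{M_2}D\to D$ is equivariant for the diagonal $h^1$-action on the ($h^1$-invariant) fibred product, hence -- by the same morphism theorem applied to the vector bundle structures that $h^1$ induces on $D\times_{M_2}D$ and on $D$ -- it is additive for $+_1$. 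That additivity is precisely the interchange law $(a+_2 b)+_1(c+_2 d)=(a+_1 c)+_2(b+_1 d)$, which together with the symmetric statement yields all of Pradines' conditions. Your handling of the double base via $h^1_0\circ h^2_0$ and the restricted homogeneity structures on $M_1$ and $M_2$ is correct, modulo the routine check that an invariant submanifold of a degree-one homogeneity structure again carries a degree-one (hence vector bundle) structure. With the differentiation step replaced by the equivariance-implies-linearity theorem, the sketch is a faithful reconstruction of the cited proof.
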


All this can be extended to  {$n$-fold graded bundles} in the obvious way:

\begin{definition} An  {$n$-fold vector bundle} ({$n$-fold graded bundle}) is a manifold $F$ equipped with $n$ homogeneity structures $h^1,\dots,h^n$ of vector (graded) bundle structures which are  {compatible} in the sense that
\vskip-.3cm$$h^i_t\circ h^j_s=h^j_s\circ h^i_t\quad \text {for all\ } s,t\in\R\quad \text{and}\quad i,j=1,\dots,n\,.$$
If $h^1\circ\cdots\circ h^n(F)$ is just a single point, we speak about an $n$-fold vector (graded space).
\end{definition}

\begin{example}
If $\tau:F\to M$ is a graded bundle of degree $k$, then there are  {canonical lifts} of the graded structure to the tangent and to the cotangent bundle.   In this way $\sT F$ and $\sT^*F$ carry canonical double graded bundle structure: one is the obvious vector bundle, the other is the lifted one (of degree $k$). There are also lifts of graded structures on $F$ to $\sT^r F$.
\end{example}

Like in the $n$-tuple vector space case, the group $\G$ of automorphisms of an $n$-tuple graded space, i.e. diffeomorphisms respecting each of homogeneity structures, are polynomials of a fixed degree in $n$-homogeneous coordinates. This means that the automorphism group of an $n$-tuple graded space is a Lie group. The subgroups $G^i$  acting by identity on the homogeneous functions of $n$-degree $\ze_i=(0,\dots,0,1,0,\dots,0)\in\{ 0,1\}^n$ are normal subgroups. By a full analogy to the case of $n$-tuple vector bundles we have the following (cf. Proposition \ref{p1} and Theorem \ref{t6}).
\begin{theorem} If $(F,h^1,\dots,h^n)$ is an $n$-tuple graded space and $\G$ is the Lie group of its automorphism, then
the collection $\wG=(\G,G^1\dots,G^n)$, where $G^i$ is the subgroup of $\G$ acting identically on homogeneous functions of $n$ degree $\ze_i$, $i=1,dots,n$, is an $n$-tuple principal group.
The associated bundle construction gives
rise to an equivalence from the category of principal $\G$-bundles on $\Pe_0$ to the category
of $n$-tuple graded bundles with the model fiber $(F,h^1,\dots,h^n)$.
\end{theorem}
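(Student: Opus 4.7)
The plan is to establish the two assertions in sequence, reducing each by induction on $n$ to the base cases already handled in the paper (Proposition \ref{p1} and Theorem \ref{t6}), together with the polynomial/coordinate description of automorphisms of graded spaces.

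For the first claim, I would work in homogeneous coordinates $(y^a_\zs)_{\zs\in\{0,1\}^n_*}$ on $F$ provided by the $n$-fold analogue of Theorem \ref{theorem:1}. Every $g\in\G$ then becomes a polynomial map of bounded degree preserving each weight $\zs$, so $\G$ is a Lie group. Each $G^i$ is defined by the polynomial condition that the action on degree-$\ze_i$ coordinates is the identity, so $G^i$ is closed. Normality of $G^i$ in $\G$ follows because the $i$-th homogeneity structure is intrinsic to $F$: if $g\in G^i$ and $h\in\G$, then $hgh^{-1}$ still acts trivially on degree-$\ze_i$ functions, since $h$ preserves the $\ze_i$-weight decomposition. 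To see that $\bigcup_{i=1}^n G^i$ generates $\G$, I would use a successive peeling argument: given $g\in\G$, compose it with suitable factors from the various $G^j$'s to kill off the nontrivial action on each weight direction one at a time, directly generalising the factorisation in Example \ref{ex2} and Proposition \ref{p1}. For the inductive clause, one checks that intersections $G^{i_1}\cap\cdots\cap G^{i_k}$ are exactly the subgroups of $\G$ fixing degrees $\ze_{i_1},\dots,\ze_{i_k}$ simultaneously; restricting to $G^i$, the remaining subgroups $\{G^j\cap G^i\}_{j\ne i}$ then form the same structure with one direction eliminated, and the induction closes.

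For the categorical equivalence, the associated bundle functor $\Pe\mapsto\Pe\times_\G F$ sends a principal $\G$-bundle on $P_0$ to an $n$-tuple graded bundle, since the $\G$-action commutes with every $h^i$ on $F$ and hence induces $n$ well-defined commuting homogeneity structures on the quotient. In the opposite direction, for each $n$-tuple graded bundle $E\to P_0$ with model fibre $F$, the frame bundle $\Iso(E,F)$ of isomorphisms of $n$-tuple graded spaces $E_x\to F$ is a principal $\G$-bundle under the right action $\psi\cdot g=\psi\circ g$, in complete analogy with the double vector bundle frame construction preceding Theorem \ref{t6}. The verifications that these functors are mutually quasi-inverse and induce a bijection on morphism sets are formally identical to the double case, using only that $\G=\Aut(F)$ acts simply transitively on frames and that both sides are locally trivial with model fibre $F$.

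The main obstacle I anticipate is the combinatorial bookkeeping in the inductive step, specifically the claim that $\bigcup_{j\ne i}(G^j\cap G^i)$ generates $G^i$. This requires the same polynomial-peeling argument used for $\G$, but applied within $G^i$ where the $\ze_i$-block is already forced to be trivial, so one must track how composition of polynomial automorphisms propagates through the remaining weight blocks without reintroducing non-trivial action on $\ze_i$. Once this combinatorial identity is established, both the group-theoretic and the categorical parts reduce to routine adaptations of the double case.
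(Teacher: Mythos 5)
Your outline follows essentially the route the paper intends; in fact the paper gives no proof of this theorem at all, merely asserting it ``by a full analogy'' with Proposition~\ref{p1} and Theorem~\ref{t6} (neither of which is itself proved), so your peeling argument for generation, the closedness/normality checks, and the frame-bundle/associated-bundle quasi-inverse pair supply strictly more detail than the source does. The one caution is that you index homogeneous coordinates by $\{0,1\}^n_*$, which is the $n$-fold \emph{vector} bundle case: for a genuine $n$-tuple \emph{graded} space the multi-weights range over a box in $\N^n$ with entries possibly exceeding $1$, so the peeling step must track all coordinates of weight $m\ze_i$ and of mixed higher weights rather than just the linear blocks --- the argument survives, but the combinatorial bookkeeping you identify as the main obstacle is correspondingly heavier there.
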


\small{\vskip1cm

\noindent Katarzyna GRABOWSKA\\
Faculty of Physics\\
                University of Warsaw} \\
               Pasteura 5, 02-093 Warszawa, Poland
                 \\Email: konieczn@fuw.edu.pl \\

\noindent Janusz GRABOWSKI\\ Polish Academy of Sciences\\ Institute of
Mathematics\\ \'Sniadeckich 8, 00-656 Warszawa, Poland
\\Email: jagrab@impan.pl \\

\end{document}